\newcommand{\mb}[1]{\mathbb{#1}}
\newcommand{\mf}[1]{\mathfrak{#1}}
\newcommand{\tx}{\text}
\newcommand{\bmtx}[1]{\begin{bmatrix} #1 \end{bmatrix}}
\newcommand{\tensor}{\otimes}
\numberwithin{equation}{section}
\newtheorem{Theorem}{Theorem}[section]
\newtheorem{Corollary}[Theorem]{Corollary}
\newtheorem{Lemma}[Theorem]{Lemma}
\newtheorem{Proposition}[Theorem]{Proposition}
{ \theoremstyle{definition}
\newtheorem{Definition}[Theorem]{Definition}
\newtheorem{Remark}[Theorem]{Remark} }
\begin{document}

\allowdisplaybreaks

\newcommand{\arXivNumber}{1507.08365}

\renewcommand{\PaperNumber}{055}

\FirstPageHeading

\ShortArticleName{A Family of Finite-Dimensional Representations of GDAHA of Higher Rank}

\ArticleName{A Family of Finite-Dimensional Representations\\ of Generalized Double Af\/f\/ine Hecke Algebras\\ of Higher Rank}

\Author{Yuchen FU and Seth SHELLEY-ABRAHAMSON}
\AuthorNameForHeading{Y.~Fu and S.~Shelley-Abrahamson}
\Address{Department of Mathematics, Massachusetts Institute of Technology, \\
182 Memorial Drive, Cambridge, MA 02139, USA}
\Email{\href{mailto:yfu@mit.edu}{yfu@mit.edu}, \href{mailto:sethsa@mit.edu}{sethsa@mit.edu}}

\ArticleDates{Received April 20, 2016, in f\/inal form June 11, 2016; Published online June 14, 2016}

\Abstract{We give explicit constructions of some f\/inite-dimensional representations of ge\-ne\-ralized double af\/f\/ine Hecke algebras (GDAHA) of higher rank using $R$-matrices for $U_q(\mf{sl}_N)$. Our construction is motivated by an analogous construction of Silvia Montarani in the rational case. Using the Drinfeld--Kohno theorem for Knizhnik--Zamolodchikov dif\/ferential equations, we prove that the explicit representations we produce correspond to Montarani's representations under a monodromy functor introduced by Etingof, Gan, and Oblomkov.}

\Keywords{generalized double af\/f\/ine Hecke algebra; $R$-matrix; Drinfeld--Kohno theorem}

\Classification{20C08}

\section{Introduction}

Generalized double af\/f\/ine Hecke algebras of higher rank (GDAHA) are a family of algebras that generalize the well-known Cherednik algebras in representation theory and were f\/irst introduced by Etingof, Oblomkov, and Rains \ in \cite{Etingof2007749} (the rank~1 case) and~\cite{EGO} (the rank $n$ case). They have been related to del Pezzo surfaces in algebraic geometry and Calogero--Moser integrable systems. A degenerate version of GDAHA, known as the rational GDAHA, was also introduced in~\cite{EGO}. In~\cite{Montarani10}, Montarani introduces two constructions of f\/inite-dimensional representations of rational GDAHA, one using $D$-modules and the other using explicit Lie theoretic methods. The latter of these methods, involving an isotypic subspace of a tensor product with a tensor power of the vector representation of $\mf{sl}_N$, is similar in spirit to the Arakawa--Suzuki functor from the BGG category $\mathcal{O}$ of $\mf{sl}_N$-modules to the category of f\/inite-dimensional representations of degenerate af\/f\/ine Hecke algebras of type $A$ constructed in \cite{arakawasuzuki1998} and to the construction of representations of af\/f\/ine braid groups given by Orellana and Ram in \cite{orellanaram}. There has been much interest in and ef\/fort devoted to developing similar techniques for the representation theory of related algebraic objects, for instance by Calaque, Enriquez, and Etingof \cite{cce} for degenerate double af\/f\/ine Hecke algebras and by Jordan \cite{jordan} in the nondegenerate case, among others.

In this paper, we generalize the Montarani's Lie theoretic construction to the non-degenerate GDAHA using $R$-matrices for the quantum groups $U_q(\mf{sl}_N)$. Furthermore, we show that the explicit representations we produce are equivalent to the image of Montarani's representations under a monodromy functor constructed in \cite{EGO} which introduces an action of a nondegenerate GDAHA on a f\/inite-dimensional representation of a rational GDAHA. In the same sense that Jordan's work \cite{jordan} is a generalization of the work of Calaque, Enriquez, and Etingof \cite{cce} to the nondegenerate case, this paper generalizes Montarani's construction to the nondegenerate case.

The structure of this paper is as follows. In Section~\ref{background-section} we f\/ix the notation we use for quantum groups and review their basic properties and also introduce the def\/inition of GDAHA. Following that, our explicit construction of representations is given in Section~\ref{explicit-section}, along with its proof. The calculation that relates this representation with the monodromy representation given in \cite{EGO} is done in Section~\ref{monodromy-section}.
\section{Background}\label{background-section}
\subsection{Generalized double af\/f\/ine Hecke algebras of higher rank (GDAHA)}

We recall the def\/inition of GDAHA in \cite{EGO}. Let $D$ be a star-shaped f\/inite graph that is not f\/inite Dynkin with $m$ legs and leg lengths $d_1, \dots, d_m$ (number of vertices on each leg, including the center).

\begin{Definition}[GDAHA]\label{GDAHA-def}
	$H_{n}(D)$, the generalized double af\/f\/ine Hecke algebra of rank $n$ associa\-ted with graph $D$, is the associative algebra generated over $\mb{C}[u_{1, 1}^{\pm 1}, \dots, u_{1 d_1}^{\pm 1}, u_{2, 1}^{\pm 1}, \dots$, $u_{m, d_m}^{\pm 1}, t^{\pm 1}]$
by invertible generators $U_1, \dots, U_m, T_{1}, \dots, T_{n-1}$ and the following relations:
\begin{enumerate}\itemsep=0pt
\item $U_1 U_2 \cdots U_m T_1 T_2 \cdots T_{n-2} T_{n-1} T_{n-1} T_{n-2} \cdots T_2 T_1 = 1$;\label{GDAHA-rel1}
\item $T_i T_{i + 1} T_i = T_{i + 1} T_i T_{i + 1}$ for $1 \le i < n - 1$;\label{GDAHA-rel2}
\item $[T_i, T_j] = 0$ for $|i - j| > 1$;\label{GDAHA-rel3}
\item $[U_i, T_j] = 0$ for $1 < i \le n - 1, 1 < j \le m$;\label{GDAHA-rel4}
\item $[U_i, T_1 U_i T_1] = 0$ for $1 \le i \le m$;\label{GDAHA-rel5}
\item $[U_i, T_1^{-1} U_j T_1] = 0$ for $1 \le i < j \le m$;\label{GDAHA-rel6}
\item $\prod\limits_{j = 1}^{d_k} (U_k - u_{k, j}) = 0$ for $1 \le k \le m$;\label{GDAHA-rel7}
\item $T_i - T_i^{-1} = t - t^{-1}$ for $1 \le i \le n - 1$.\label{GDAHA-rel8}
\end{enumerate}
\end{Definition}

The rank-$n$ GDAHA $H_{n, m}$ can be seen as a quotient of the group algebra of the fundamental group of the conf\/iguration space of $n$ unordered points on the $m$-punctured sphere, where the quotient is by the eigenvalue relations \eqref{GDAHA-rel7} and \eqref{GDAHA-rel8}. From this perspective, $U_i$ is represented by a path in the conf\/iguration space in which one of the points loops around a missing point $\alpha_i$, and $T_i$ is represented by a typical braid group generator exchanging the positions of two points in the conf\/iguration. Note that the generators $T_i$ satisfy the relations of the f\/inite-type Hecke algebra of type $A_{n - 1}$.

\subsection[The quantum group $U_q(\mf{sl}_N)$]{The quantum group $\boldsymbol{U_q(\mf{sl}_N)}$}

Due to the large volume of literature and varying conventions regarding quantum groups, it is necessary to f\/ix the notations that we work with. We use conventions compatible with, for example, \cite{etingof1998lectures,jantzen1996lectures, KS}. Throughout this paper, it is assumed that $q$ is a nonzero complex number that is not a root of unity.

For every $n \in \mb{Z}$, let $[n]_q$ denote the associated symmetrized $q$-number
\begin{gather*}
[n]_q := \dfrac{q^n - q^{-n}}{q - q^{-1}}.
\end{gather*}
In particular, we have
\begin{gather*}
[n]_q = q^{n - 1} + q^{n - 3} + \cdots + q^{-n + 3} + q^{-n + 1}
\end{gather*}
for $n > 0$ and $[n]_q = -[-n]_q$ for all $n$. Def\/ine $q$-binomial coef\/f\/icients by
\begin{gather*}\bmtx{a \\ n}_q := \dfrac{[a]_q [a - 1]_q \cdots [a - n + 1]_q}{[1]_q [2]_q \cdots [n]_q}\end{gather*}
for all $a \in \mb{Z}$, $n \in \mb{N}$, so in particular $\bmtx{a \\ 0}_q = 1$ by the usual convention on empty products. Def\/ine $q$-factorials $[a]_q^{!}$ for $a \in \mb{N}$ by
\begin{gather*}[a]_q^! = [1]_q [2]_q \cdots [a]_q.\end{gather*} Note that $[0]_q^! = 1$.

\begin{Definition}[quantum group $U_q(\mf{sl}_N)$]
	Let $A = (a_{ij})$ be the Cartan matrix for the Lie algebra $\mf{sl}_N$. The quantum group $U_q(\mf{sl}_N)$ is the associative $\mb{C}$-algebra with generators $E_i$, $F_i$, $K_i$, $K_i^{-1}$ ($1 \le i < N$) and relations:
\begin{gather*}
[K_i, K_j] = 0, \\
K_i K_i^{-1} = 1 = K_i^{-1} K_i, \\
K_i E_j = q^{a_{ij}} E_j K_i, \\
K_i F_j = q^{-a_{ij}} F_j K_i, \\
[E_i, F_j] = \delta_{ij} \frac{K_i - K_i^{-1}}{q_i - q_i^{-1}}, \\
\sum_{r = 0}^{1 - a_{ij}} (-1)^r \bmtx{1-a_{ij} \\ r}_{q} E_i^{1 - a_{ij} - r} E_j E_i^r = 0,\quad \text{and} \\
\sum_{r = 0}^{1 - a_{ij}} (-1)^r \bmtx{1-a_{ij} \\ r}_{q} F_i^{1 - a_{ij} - r} F_j F_i^r = 0 \quad \text{if} \ \ i \neq j,
\end{gather*}
where $\delta_{ij}$ is the Kronecker delta and $[a, b] = ab - ba$.
\end{Definition}

We will use the Hopf algebra structure on $U_q(\mf{sl}_N)$ specif\/ied in the following standard proposition:

\begin{Proposition}[Hopf algebra structure on $U_q(\mf{sl}_N)$] Denote $U = U_q(\mf{sl}_N)$. The assignments below extend to unique algebra homomorphisms $(\Delta\colon  U \to U \tensor U,$ $\varepsilon\colon  U \to \mb{C},$ $\eta\colon  \mb{C} \to U,$ $S\colon U \to U^{\rm op})$ that give $U$ a Hopf algebra structure:
\begin{gather*}
\Delta(K_i) = K_i \tensor K_i,\qquad
\Delta(E_i) = E_i \tensor K_i + 1 \tensor E_i, \qquad \Delta(F_i) = F_i \tensor 1 + K_i^{-1} \tensor F_i, \\
\varepsilon(E_i) = \varepsilon(F_i) = 0, \qquad \varepsilon(K_i) = 1,\qquad
S(K_i) = K_i^{-1},\qquad  S(E_i) = - E_i K_i^{-1}, \\ S(F_i) = - K_i F_i.
\end{gather*}
\end{Proposition}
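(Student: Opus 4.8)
The plan is to exploit the fact that $U = U_q(\mf{sl}_N)$ is presented by generators and relations, so that to construct an algebra homomorphism out of $U$ it suffices to specify the images of the generators $E_i, F_i, K_i^{\pm 1}$ and to check that those images satisfy every defining relation in the target algebra. I would apply this principle four times: to $\Delta$ (target the tensor-product algebra $U \tensor U$), to $\varepsilon$ (target $\C$), to $\eta$ (the unit map $\lambda \mapsto \lambda 1$, which requires nothing), and to $S$ (target $U^{\rm op}$, so that $S$ is an algebra anti-homomorphism and each relation must be verified with all products reversed). Uniqueness of each extension is automatic, since the listed generators generate $U$ and a homomorphism is determined by its values on generators. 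Once the four maps are shown to exist, it remains to verify the Hopf axioms — coassociativity of $\Delta$, the two counit identities, and the two antipode identities — and because every map in sight is an algebra (anti-)homomorphism, each axiom need only be checked on the generators.

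For $\Delta$ and $\varepsilon$ the verification splits relation by relation. The relations among the $K_i^{\pm 1}$ are immediate because $\Delta(K_i) = K_i \tensor K_i$ is grouplike and $\varepsilon(K_i) = 1$. The relations $K_i E_j = q^{a_{ij}} E_j K_i$ and $K_i F_j = q^{-a_{ij}} F_j K_i$ follow by expanding both sides in $U \tensor U$ and using the commutation relations in each tensor leg; I would do this once for $E$ and note that the $F$ case is symmetric. For the relation $[E_i, F_j] = \delta_{ij}(K_i - K_i^{-1})/(q_i - q_i^{-1})$ I would expand $\Delta(E_i)\Delta(F_j) - \Delta(F_j)\Delta(E_i)$, at which point the cross terms cancel via the $K$-commutation relations and one is left with exactly $\delta_{ij}$ times the comultiplication of the right-hand side.

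The genuinely laborious step, and the one I expect to be the main obstacle, is checking that $\Delta$ preserves the two quantum Serre relations. Here one must expand $\Delta(E_i)^{1-a_{ij}-r}\Delta(E_j)\Delta(E_i)^r$ in $U \tensor U$ and show that the alternating $q$-binomial sum vanishes. The cleanest route is to first establish, by induction on the exponent, a closed formula of the shape $\Delta(E_i)^s = \sum_{k=0}^s c_{s,k}\, E_i^{s-k} K_i^k \tensor E_i^k$ with explicit $q$-binomial coefficients $c_{s,k}$ (and an analogous formula for $F_i$), which rests on the $q$-commutation $(E_i \tensor K_i)(1 \tensor E_i) = q^2 (1 \tensor E_i)(E_i \tensor K_i)$. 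Substituting this into the Serre sum and reorganizing the resulting double sum, the vanishing then reduces to $q$-binomial identities of Chu--Vandermonde type together with the fact that the Serre relation already holds in $U$ in each tensor factor. Keeping careful track of the powers of $q$ generated when commuting $K_i$ past $E_i$ is where the bookkeeping is heaviest.

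Finally, for $S$ I would verify the reversed relations in $U^{\rm op}$ — for instance that $S(E_i)S(F_j) - S(F_j)S(E_i)$, computed in $U$, equals the appropriate multiple of $S(K_i) - S(K_i^{-1})$, with the Serre relations handled by the same $q$-binomial manipulation as above — and then check the Hopf axioms on generators. Coassociativity and the counit identities are short computations on $E_i, F_i, K_i$ using that $K_i$ is grouplike and $E_i, F_i$ are skew-primitive. For the antipode axiom, writing $m$ for the multiplication map, evaluating $m(S \tensor \mathrm{id})\Delta$ on $K_i$ gives $K_i^{-1}K_i = 1 = \varepsilon(K_i)$, and on $E_i$ gives $S(E_i)K_i + S(1)E_i = -E_i K_i^{-1}K_i + E_i = 0 = \varepsilon(E_i)$, with $F_i$ and the identity $m(\mathrm{id} \tensor S)\Delta$ entirely analogous. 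This confirms that the given $S$ is a two-sided antipode and completes the proof.
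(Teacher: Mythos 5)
The paper gives no proof of this proposition: it is stated as a standard fact, with the conventions referred to the cited references (Klimyk--Schm\"udgen, Jantzen, Kassel), where exactly the argument you outline is carried out. Your plan --- extend by generators and relations, check each defining relation under $\Delta$, $\varepsilon$, $S$ (with the quantum Serre relations handled via the $q$-binomial expansion of $\Delta(E_i)^s$ and a Chu--Vandermonde-type identity), then verify the Hopf axioms on generators --- is the standard proof and is correct; your spot-checks of the antipode identity on $E_i$ and of the $q^2$-commutation $(E_i\tensor K_i)(1\tensor E_i)=q^{2}(1\tensor E_i)(E_i\tensor K_i)$ are consistent with the paper's conventions (note $q_i=q$ here since all roots of $\mf{sl}_N$ have the same length).
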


\subsection[$R$-matrices]{$\boldsymbol{R}$-matrices}

In this section we will f\/ix our conventions for $R$-matrices for $U_q(\mf{sl}_N)$. All statements below are well-known and their proofs can be found in \cite{KS} or \cite{kassel2012quantum}.

\begin{Definition}For a given pair $(V, V')$ of f\/inite-dimensional $U_q(\mf{sl}_N)$-modules, def\/ine the linear map $f\colon V \tensor V' \to V \tensor V'$ such that $f(v \tensor w) = q^{( \lambda, \mu )} (v \tensor w)$ if $v$ and $w$ have weights~$\lambda$ and~$\mu$, respectively, where $(\cdot, \cdot)$ is the standard pairing on the weight lattice.
\end{Definition}

\begin{Proposition}\label{r-matrix prop}
There exists an element
\begin{gather*}\widetilde{\mf{R}} \in 1 + U_q(\mf{sl}_n)_{> 0} \,\hat{\otimes}\, U_q(\mf{sl_n})_{< 0}\end{gather*}
in an appropriate completion $U_q(\mf{sl}_N) \hat{\otimes} U_q(\mf{sl}_N)$ such that the operators \begin{gather*}\mf{R} := f \circ \widetilde{\mf{R}},\qquad  R := P \circ \mf{R} = P \circ f \circ \widetilde{\mf{R}}\end{gather*} on tensor products of finite-dimensional $U_q(\mf{sl}_N)$-modules satisfy, for any finite-dimensional mo\-du\-les $V_1$, $V_2$, $V_3$:
	\begin{itemize}\itemsep=0pt
	\item $R\colon V_1 \otimes V_2 \rightarrow V_2 \otimes V_1$ is an isomorphism of $U_q(\mf{sl}_n)$-modules,
	\item $\mf{R}_{12,3} = \mf{R}_{13} \mf{R}_{23}$,
	\item $\mf{R}_{1,23} = \mf{R}_{13} \mf{R}_{12}$,
	\end{itemize}
where $P$ denotes the operator exchanging tensor factors and the subscripts indicate the tensor factor positions on which $\mf{R}$ acts.
 \end{Proposition}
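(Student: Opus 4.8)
The plan is to obtain $\widetilde{\mathfrak{R}}$ and all of its asserted properties at once from the canonical quasi-triangular structure on a Drinfeld quantum double, and then to specialize that abstract $R$-matrix to $U_q(\mathfrak{sl}_N)$ in a form that manifestly lies in the stated completion. This conceptual route is preferable to verifying the relations directly from an explicit formula, since on the double the intertwining relation and the two hexagon identities hold for structural reasons; the explicit product formula then enters only to read off the triangular shape and to identify the Cartan factor with $f$.

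First I would fix the Hopf pairing $\langle\,\cdot\,,\cdot\,\rangle\colon U_q(\mathfrak{b}^+)\otimes U_q(\mathfrak{b}^-)\to\mathbb{C}$ between the two Borel subalgebras, normalized on generators (in the conventions of \cite{KS}) by $\langle E_i,F_j\rangle=\delta_{ij}(q-q^{-1})^{-1}$ and by pairing the group-like elements through powers of $q^{(\alpha_i,\alpha_j)}$, and extended multiplicatively via the coproduct. Forming the quantum double $D=D(U_q(\mathfrak{b}^+))$ yields a Hopf algebra equipped with a canonical universal $R$-matrix
\[
\mathcal{R}=\sum_{\mu}e^{\mu}\otimes e_{\mu}\in\widehat{D\otimes D},
\]
a sum over dual bases with respect to the pairing. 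By the general theory of the double, $\mathcal{R}$ automatically satisfies $\mathcal{R}\,\Delta(x)=\Delta^{\mathrm{op}}(x)\,\mathcal{R}$ for all $x\in D$ together with the identities $\mathcal{R}_{12,3}=\mathcal{R}_{13}\mathcal{R}_{23}$ and $\mathcal{R}_{1,23}=\mathcal{R}_{13}\mathcal{R}_{12}$, and it is invertible with $\mathcal{R}^{-1}=(S\otimes\mathrm{id})\mathcal{R}$.

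Next I would pass to $U_q(\mathfrak{sl}_N)$: the double surjects onto $U_q(\mathfrak{sl}_N)$ upon identifying the two copies of the Cartan part through the pairing, and one checks that $\mathcal{R}$ descends to the quotient. To exhibit the image in $1+U_q(\mathfrak{sl}_N)_{>0}\,\hat\otimes\,U_q(\mathfrak{sl}_N)_{<0}$ after separating the Cartan contribution, I would compute the dual bases on PBW monomials using Lusztig's braid-group operators: a reduced word for the longest element $w_0\in S_N$ orders the positive roots $\beta_1,\dots,\beta_\ell$ and produces quantum root vectors $E_{\beta_k},F_{\beta_k}$, and the standard pairing computation gives the ordered product
\[
\widetilde{\mathfrak{R}}=\prod_{k=1}^{\ell}\exp_{q}\!\big((q-q^{-1})\,E_{\beta_k}\otimes F_{\beta_k}\big),
\]
where $\exp_q$ is the $q$-exponential. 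Each factor is $1$ plus terms in $U_{>0}\hat\otimes U_{<0}$, so $\widetilde{\mathfrak{R}}$ lies in the required completion and acts as a genuine (finite) operator on finite-dimensional modules because the $E_{\beta_k}$ are nilpotent there; the residual diagonal factor is exactly the operator $f$ acting by $q^{(\lambda,\mu)}$ on weight vectors, so that $\mathcal{R}=f\circ\widetilde{\mathfrak{R}}=\mathfrak{R}$ and $R=P\circ\mathfrak{R}$ as in the statement. The descended intertwining relation $\mathfrak{R}\,\Delta(x)=\Delta^{\mathrm{op}}(x)\,\mathfrak{R}$ is precisely the assertion that $R\colon V_1\otimes V_2\to V_2\otimes V_1$ is a morphism of $U_q(\mathfrak{sl}_N)$-modules, and invertibility follows from $\mathcal{R}^{-1}=(S\otimes\mathrm{id})\mathcal{R}$, giving the claimed isomorphism. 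The two hexagon identities for $\mathfrak{R}$ follow from those for $\mathcal{R}$ together with the elementary multiplicativity $f_{12,3}=f_{13}f_{23}$ and $f_{1,23}=f_{13}f_{12}$, which hold because $q^{(\lambda+\mu,\nu)}=q^{(\lambda,\nu)}q^{(\mu,\nu)}$.

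The main obstacle I expect is the foundational step: showing that the double pairing is well defined and nondegenerate, i.e.\ that it respects the quantum Serre relations on both Borel factors, and that the double genuinely surjects onto $U_q(\mathfrak{sl}_N)$ with $\mathcal{R}$ descending to the quotient. These are the technical heart of the construction. The subsequent identification of the dual bases with the $q$-exponential product is combinatorially involved but routine once the braid-group operators and their effect on the pairing are available, and for those standard computations I would rely on \cite{KS,kassel2012quantum} rather than reproducing them.
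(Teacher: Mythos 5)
The paper gives no proof of this proposition beyond the pointer to \cite{KS} (Section 8.3.3) and \cite{kassel2012quantum}, and your sketch is precisely the standard argument those references contain: the quasi-triangular structure on the Drinfeld double yields the intertwining property and the two coproduct identities for structural reasons, while the PBW/$q$-exponential product formula places $\widetilde{\mf{R}}$ in the stated completion $1 + U_q(\mf{sl}_N)_{>0}\,\hat{\otimes}\,U_q(\mf{sl}_N)_{<0}$ and isolates the Cartan contribution as the weight-space operator $f$ --- the one genuinely delicate point when $q$ is a numeric parameter rather than formal, which you correctly handle by noting nilpotency of the root vectors on finite-dimensional (type-1) modules. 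Your outline is correct and is essentially the proof the paper delegates to the literature.
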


For an explicit description of the element $\widetilde{\mf{R}}$, see \cite[Section 8.3.3]{KS}.

The operator $\mf{R}$ satisf\/ies the quantum Yang--Baxter equation:

\begin{Proposition}[QYBE]
$\mf{R}_{12} \mf{R}_{13} \mf{R}_{23} = \mf{R}_{23} \mf{R}_{13} \mf{R}_{12}$.
\end{Proposition}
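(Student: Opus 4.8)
The plan is to derive the quantum Yang--Baxter equation for $\mf{R}$ directly from the two hexagon-type identities $\mf{R}_{12,3} = \mf{R}_{13}\mf{R}_{23}$ and $\mf{R}_{1,23} = \mf{R}_{13}\mf{R}_{12}$ stated in Proposition~\ref{r-matrix prop}, combined with the naturality of $\mf{R}$ as an intertwiner. The key observation is that the hexagon relations express how $\mf{R}$ behaves when one of its two ``slots'' is itself a tensor product, and that the coproduct $\Delta$ on $U_q(\mf{sl}_N)$ governing these tensor-product module structures is the engine that forces the braid relation.

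First I would record the fundamental intertwining property: since $R = P \circ \mf{R}$ is a morphism of $U_q(\mf{sl}_N)$-modules $V_1 \otimes V_2 \to V_2 \otimes V_1$, the operator $\mf{R}$ on $V_1 \otimes V_2$ commutes with the image of $\Delta(x)$ for all $x \in U_q(\mf{sl}_N)$ after the appropriate flip; equivalently, $\mf{R}$ implements the intertwiner between $\Delta$ and $\Delta^{\mathrm{op}}$. I would then apply the first hexagon identity $\mf{R}_{12,3} = \mf{R}_{13}\mf{R}_{23}$, which treats the first two factors as a single tensor slot acting against the third, and the second hexagon identity $\mf{R}_{1,23} = \mf{R}_{13}\mf{R}_{12}$, which treats the last two factors as a single slot. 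Using naturality of $\mf{R}$ with respect to the morphism $\mf{R}_{12}$ acting on the first two factors, together with these hexagon relations, one obtains an equation relating $\mf{R}_{12}\mf{R}_{13}\mf{R}_{23}$ and $\mf{R}_{23}\mf{R}_{13}\mf{R}_{12}$ as the two ways of computing a single canonical intertwiner on $V_1 \otimes V_2 \otimes V_3$.

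Concretely, the standard argument runs as follows. By naturality, since $\mf{R}_{23}\colon V_2 \otimes V_3 \to V_2 \otimes V_3$ is a module map (after the flip), applying $\mf{R}$ in the $(1, 23)$-configuration commutes past it in the expected way; expanding $\mf{R}_{1,23}$ by the second hexagon and then moving the $\mf{R}_{23}$ factor through using naturality produces $\mf{R}_{13}\mf{R}_{12}\mf{R}_{23}$ on one side. Performing the symmetric manipulation with the first hexagon $\mf{R}_{12,3} = \mf{R}_{13}\mf{R}_{23}$ and commuting $\mf{R}_{12}$ through yields $\mf{R}_{23}\mf{R}_{13}\mf{R}_{12}$ on the other side, and the two computations must agree because both compute the action of $\mf{R}$ between the total module structure and its opposite. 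Matching the two expressions and cancelling the common middle factor gives exactly $\mf{R}_{12}\mf{R}_{13}\mf{R}_{23} = \mf{R}_{23}\mf{R}_{13}\mf{R}_{12}$.

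The main obstacle I anticipate is bookkeeping the naturality step precisely: one must be careful that the flip operator $P$ hidden in $R = P \circ \mf{R}$ does not scramble the tensor-leg indices when $\mf{R}$ is moved past another $\mf{R}_{ij}$, so that the subscripts $13$, $12$, $23$ land in the claimed positions rather than permuted ones. Keeping $\mf{R}$ (rather than $R$) as the primary object helps, since $\mf{R}$ acts without permuting factors and the hexagon relations are stated directly for $\mf{R}$; the cleanest route is therefore to work entirely with $\mf{R}$ and its two hexagon identities, invoking naturality only in the form that $\mf{R}$ is compatible with the coproduct, and to verify at the end that all three factors occupy the intended slots. This reduces the proof to a short diagram-chase once the intertwining and hexagon properties are in hand.
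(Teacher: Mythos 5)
The paper does not actually prove this proposition: it is stated as one of the ``well-known'' facts whose proofs are deferred to \cite{KS} and \cite{kassel2012quantum}, so there is no internal argument to compare against. Your route --- deriving the quantum Yang--Baxter equation from the two coproduct identities $\mf{R}_{12,3} = \mf{R}_{13}\mf{R}_{23}$ and $\mf{R}_{1,23} = \mf{R}_{13}\mf{R}_{12}$ together with the fact that $\mf{R}$ intertwines $\Delta$ with $\Delta^{\mathrm{op}}$ --- is exactly the standard argument found in those references, and it is sound. That said, your description of the final step is muddier than it needs to be: there is no ``cancelling of a common middle factor.'' The clean computation uses only \emph{one} hexagon and the intertwining property, namely
\begin{gather*}
\mf{R}_{12}\,\mf{R}_{13}\mf{R}_{23} \;=\; \mf{R}_{12}\,\mf{R}_{12,3} \;=\; \mf{R}_{21,3}\,\mf{R}_{12} \;=\; \mf{R}_{23}\mf{R}_{13}\,\mf{R}_{12},
\end{gather*}
where the middle equality is the statement that $\mf{R}_{12}$ conjugates the module structure on $V_1 \otimes V_2$ (hence the operator $\mf{R}_{12,3}$ built from it) into the opposite one, and the last equality is the first hexagon read in the flipped order of the first two slots. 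Your concern about the hidden flip $P$ in $R = P \circ \mf{R}$ is well placed and is precisely why working with $\mf{R}$ throughout, as you propose, is the right choice; the only remaining point worth making explicit is that the intertwining property must be verified for the full operator $\mf{R} = f \circ \widetilde{\mf{R}}$ (including the weight-pairing factor $f$), which is exactly what Proposition~2.5 packages for you. With that one-line computation substituted for your two-sided ``matching'' step, the proof is complete and agrees with the argument in the cited sources.
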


\begin{Corollary}\label{R-form-braid-group}
For $1 \le i \neq j < n$, let $R_i = P_{i, i + 1} \circ \mf{R}_{i, i + 1}$, then we have $R_i R_{i + 1} R_i = R_{i + 1} R_i R_{i + 1}$ and $R_i R_j = R_j R_i$ for $|i - j| > 1$, and in particular the $R_i$ operators yield a~representation of the braid group $B_n$ on $V^{\tensor n}$ for any finite-dimensional $U_q(\mf{sl}_N)$-representation~$V$, and this representation is functorial in~$V$.
\end{Corollary}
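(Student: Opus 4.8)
The plan is to check that the operators $R_i$ satisfy exactly the two families of defining relations of the braid group $B_n$ on generators $\sigma_1,\dots,\sigma_{n-1}$, namely the braid relations $\sigma_i\sigma_{i+1}\sigma_i = \sigma_{i+1}\sigma_i\sigma_{i+1}$ and the far-commutativity $\sigma_i\sigma_j=\sigma_j\sigma_i$ for $|i-j|>1$, and then to invoke the standard presentation of $B_n$. Since $\widetilde{\mf{R}}$ is invertible in the completion (it lies in $1 + U_q(\mf{sl}_N)_{>0}\,\hat{\otimes}\,U_q(\mf{sl}_N)_{<0}$), the diagonal operator $f$ has nonzero eigenvalues $q^{(\lambda,\mu)}$, and $P$ is a permutation, each $R_i$ is invertible; hence an assignment defined on generators and satisfying the braid relations automatically extends to a group homomorphism, i.e.\ a representation of $B_n$ on $V^{\tensor n}$.

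The key computational device is the conjugation rule for the permutation operators: for any operator $A_{jk}$ supported on tensor factors $j,k$ and the adjacent transposition $\sigma=(i,i+1)$ one has $P_{i,i+1}\,A_{jk}\,P_{i,i+1} = A_{\sigma(j)\sigma(k)}$; in particular $P_{12}\mf{R}_{13}P_{12}=\mf{R}_{23}$, $P_{23}\mf{R}_{12}P_{23}=\mf{R}_{13}$, and so on. Working on three consecutive factors, which I may take to be $1,2,3$, I would expand $R_1R_2R_1 = P_{12}\mf{R}_{12}\,P_{23}\mf{R}_{23}\,P_{12}\mf{R}_{12}$ and use these identities to slide the three permutation operators to the left, collecting $P_{12}P_{23}P_{12}=P_{13}$. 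A short bookkeeping then yields $R_1R_2R_1 = P_{13}\,\mf{R}_{23}\mf{R}_{13}\mf{R}_{12}$ and, by the symmetric manipulation, $R_2R_1R_2 = P_{13}\,\mf{R}_{12}\mf{R}_{13}\mf{R}_{23}$. Cancelling the common factor $P_{13}$, the braid relation becomes equivalent to $\mf{R}_{23}\mf{R}_{13}\mf{R}_{12} = \mf{R}_{12}\mf{R}_{13}\mf{R}_{23}$, which is precisely the quantum Yang--Baxter equation from the preceding proposition.

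The far-commutativity relation is immediate: for $|i-j|>1$ the operators $R_i$ and $R_j$ are supported on disjoint sets of tensor factors, so they commute. Finally, functoriality in $V$ follows from naturality of the braiding supplied by Proposition~\ref{r-matrix prop}: if $\phi\colon V\to W$ is a morphism of $U_q(\mf{sl}_N)$-modules, then both $\mf{R}$ and $P$ are compatible with $\phi\tensor\phi$, so $\phi^{\tensor n}$ intertwines the two braid group representations on $V^{\tensor n}$ and $W^{\tensor n}$. The only real work is the bookkeeping in the braid-relation computation --- tracking the three permutation operators past the $\mf{R}$ factors without index errors --- but this is purely formal once the conjugation rule is in hand, and the content of the statement is just the reinterpretation of the QYBE as the braid relation for $R=P\circ\mf{R}$.
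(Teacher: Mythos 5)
Your proposal is correct and follows exactly the route the paper intends: the paper states this as an immediate corollary of the quantum Yang--Baxter equation (deferring details to the standard references), and your computation---sliding the permutation operators left via $P_\sigma A_{jk} P_\sigma = A_{\sigma(j)\sigma(k)}$, reducing the braid relation to $\mf{R}_{23}\mf{R}_{13}\mf{R}_{12} = \mf{R}_{12}\mf{R}_{13}\mf{R}_{23}$, and handling far-commutativity by disjoint supports---is precisely the standard verification being invoked. Your treatment of invertibility and functoriality (naturality of $\mf{R}$ and $P$) is also correct, so there is nothing to fix.
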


\begin{Proposition}\label{R-form-Hecke-algebra}
Let $\mb{C}^N$ be the $N$-dimensional vector representation of $U_q(\mf{sl}_N)$. Then $R_i$ $(1 \le i < n)$ act on the space $\big(\mb{C}^N\big)^{\tensor n}$. The operators $q^{1/N}R_i$ on $\big(\mb{C}^N\big)^{\otimes n}$ act with eigenvalues~$q$,~$-q^{-1}$ and in particular define a representation of the Hecke algebra of type $A_{n - 1}$ with parameter~$q$.
\end{Proposition}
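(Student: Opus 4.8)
The plan is to reduce the whole statement to a computation on $\mb{C}^N \tensor \mb{C}^N$ and then invoke Schur's lemma. Since $R_i = P_{i,i+1}\circ\mf{R}_{i,i+1}$ acts as the identity on every tensor factor other than the $i$-th and $(i+1)$-th, its eigenvalues on $\big(\mb{C}^N\big)^{\tensor n}$ are exactly those of $R = P\circ\mf{R}$ on $\mb{C}^N\tensor\mb{C}^N$, so it suffices to treat the two-factor case. Because $q$ is not a root of unity, the category of finite-dimensional $U_q(\mf{sl}_N)$-modules is semisimple with the same tensor product decompositions as in the classical case; in particular $\mb{C}^N\tensor\mb{C}^N$ is the direct sum of two non-isomorphic irreducibles, the $q$-symmetric part $S^2_q$ (highest weight $2\lambda_1$) and the $q$-antisymmetric part $\Lambda^2_q$ (highest weight $\lambda_1 + \lambda_2$), where $\lambda_i$ is the weight of the standard basis vector $v_i$. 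By Proposition~\ref{r-matrix prop}, $R$ is an endomorphism of $\mb{C}^N\tensor\mb{C}^N$ as a $U_q(\mf{sl}_N)$-module, so by Schur's lemma it acts by a scalar on each of $S^2_q$ and $\Lambda^2_q$. The statement thus reduces to computing these two scalars.

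First I would compute the scalar on $S^2_q$ using the highest weight vector $v_1\tensor v_1$. The vector representation is minuscule, so every nonidentity term of $\widetilde{\mf{R}}$ carries a weight-raising operator in its first tensor slot, and such an operator annihilates the highest weight vector $v_1$; hence $\widetilde{\mf{R}}(v_1\tensor v_1) = v_1\tensor v_1$ and $R(v_1\tensor v_1) = P\circ f(v_1\tensor v_1) = q^{(\lambda_1,\lambda_1)}\,v_1\tensor v_1$. With the standard form normalized so that roots have squared length $2$, one computes $(\lambda_1,\lambda_1) = 1 - 1/N$, so the scalar on $S^2_q$ is $q^{1-1/N}$. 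For the scalar on $\Lambda^2_q$ I would restrict $R$ to the two-dimensional weight space spanned by $v_1\tensor v_2$ and $v_2\tensor v_1$, which meets both summands. The same minusculeness forces $\widetilde{\mf{R}}$ to truncate to $1\tensor 1$ plus its leading term $(q-q^{-1})\sum_k E_k\tensor F_k$, giving
\begin{gather*}
\widetilde{\mf{R}}(v_1\tensor v_2) = v_1\tensor v_2, \\
\widetilde{\mf{R}}(v_2\tensor v_1) = v_2\tensor v_1 + (q - q^{-1})\,v_1\tensor v_2.
\end{gather*}
Applying $f$, which multiplies both basis vectors by $q^{(\lambda_1,\lambda_2)} = q^{-1/N}$, and then the flip $P$, yields the matrix $q^{-1/N}\pmtx{0 & 1 \\ 1 & q - q^{-1}}$, whose characteristic polynomial $\lambda^2 - (q - q^{-1})\lambda - 1$ has roots $q$ and $-q^{-1}$. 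The eigenvector $v_1\tensor v_2 - q^{-1}v_2\tensor v_1$ is a highest weight vector generating $\Lambda^2_q$, so the scalar on $\Lambda^2_q$ is $q^{-1/N}\cdot(-q^{-1}) = -q^{-1-1/N}$, while the other eigenvector lies in $S^2_q$ and recovers $q^{1-1/N}$.

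Combining these, $q^{1/N}R$ acts by $q$ on $S^2_q$ and by $-q^{-1}$ on $\Lambda^2_q$, which are distinct since $q$ is not a root of unity; these are the asserted eigenvalues of $q^{1/N}R_i$. It follows that each $q^{1/N}R_i$ satisfies the quadratic relation $\big(q^{1/N}R_i - q\big)\big(q^{1/N}R_i + q^{-1}\big) = 0$. Multiplying all of the $R_i$ by the common scalar $q^{1/N}$ preserves the braid relations established in Corollary~\ref{R-form-braid-group}, so the operators $q^{1/N}R_i$ satisfy both the type $A_{n-1}$ braid relations and the Hecke quadratic relation, hence define a representation of the Hecke algebra of type $A_{n-1}$ with parameter $q$. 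I expect the main obstacle to be the explicit handling of $\widetilde{\mf{R}}$: one must justify that on the minuscule vector representation the series defining $\widetilde{\mf{R}}$ collapses to its first two terms, and one must pin down the normalization of the weight pairing so that the power $1 - 1/N$, and therefore the balancing factor $q^{1/N}$, comes out correctly.
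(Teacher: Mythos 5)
Your argument is correct, but it takes a genuinely different route from the paper: the paper simply cites this as Proposition~23 in Section~8.4.3 of Klimyk--Schm\"udgen, whereas you give a self-contained computation. Your reduction to $\mb{C}^N \tensor \mb{C}^N$, the Schur's lemma argument on $S^2_q \oplus \Lambda^2_q$, and the explicit $2\times 2$ matrix all check out, and the scalars $q^{1-1/N}$ and $-q^{-1-1/N}$ are consistent with what Lemma~\ref{R2-action-on-two} gives for $R^2$ on $V(2\epsilon_1)$ and $V(\epsilon_1+\epsilon_2)$, which is a useful independent sanity check. What your approach buys is transparency: it makes visible exactly where the balancing factor $q^{1/N}$ comes from (the $-1/N$ in $(\bar\epsilon_i,\bar\epsilon_j)$ under the normalization where roots have squared length~$2$, the same normalization the paper uses when it asserts $(\epsilon_1,\epsilon_1+2\rho)=N-1/N$), at the cost of having to handle the explicit form of $\widetilde{\mf{R}}$. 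On that last point, the one place to be slightly careful is your appeal to minusculeness: the quantum root vectors $E_\alpha$ for \emph{non-simple} positive roots do act nontrivially on $\mb{C}^N$ in general (e.g., $v_3 \mapsto v_1$), so the series for $\widetilde{\mf{R}}$ does not truncate globally on $\mb{C}^N\tensor\mb{C}^N$; what is true, and what your computation actually uses, is that on the specific weight spaces spanned by $v_1\tensor v_1$ and by $\{v_1\tensor v_2,\ v_2\tensor v_1\}$ every term other than $1\tensor 1$ and $(q-q^{-1})E_1\tensor F_1$ acts by zero, since raising $v_1$ gives zero and the only positive root $\alpha$ with $\epsilon_2+\alpha$ a weight of $\mb{C}^N$ is $\alpha_1$. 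With that wording tightened, the proof is complete and independent of the cited reference.
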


\begin{proof} This is Proposition 23 in Section 8.4.3 of \cite{KS}.\end{proof}

\subsection[Ribbon category structure on $U_q(\mf{sl}_N) \text{-}\mathrm{mod}_{\rm f.d.}$]{Ribbon category structure on $\boldsymbol{U_q(\mf{sl}_N) \text{-}\mathrm{mod}_{\rm f.d.}}$}\label{ribbon-section}

Recall that a type-1 representation of $U_q(\mf{sl}_N)$ is a representation $V$ such that $V$ has a weight decomposition $V = \oplus_\lambda V_\lambda$ where the direct sum is over the weight lattice for $\mf{sl}_N$ and $V_\lambda$ is the subspace
\begin{gather*}V_\lambda := \big\{v \in C\colon K_\mu(v) = q^{(\lambda, \mu)}v~\text{for all roots}~\mu\big\}.\end{gather*}
Let $U_q(\mf{sl}_N)\text{-}\mathrm{mod}_{\rm f.d.}$ denote the category of f\/inite-dimensional type-1 representations of~$U_q(\mf{sl}_N)$. We will only consider representations in this category.

The operator $R$, along with the Hopf algebra structure on $U_q(\mf{sl}_N)$, gives the category $U_q(\mf{sl}_N)\text{-}\mathrm{mod}_{\rm f.d.}$ of f\/inite-dimensional type-1 representations of $U_q(\mf{sl}_N)$ the structure of a braided rigid tensor category. In fact, $U_q(\mf{sl}_N)\text{-}\mathrm{mod}_{\rm f.d.}$ has even richer structure~-- it is a~\emph{ribbon category}. In particular, there exists an automorphism $\theta$ of the identity functor on $U_q(\mf{sl}_N)\text{-}\mathrm{mod}_{\rm f.d.}$ which is given by multiplication by $q^{(\lambda, \lambda + 2 \rho)}$ (where, as usual, $\rho$ is the half-sum of the positive roots) on any irreducible representation with highest weight $\lambda$, and $\theta$ satisf\/ies the compatibility
\begin{equation}
\theta_{V(\lambda) \otimes V(\mu)} = R^2 \circ \big(\theta_{V(\lambda)} \otimes \theta_{V(\mu)}\big),\label{eq:ribbon-equation}
\end{equation}
where $V(\lambda)$ and $V(\mu)$ are the irreducible highest weight representations with highest weights~$\lambda$ and~$\mu$, respectively, and $R$ is the $R$-matrix introduced above. An $h$-adic version of these statements can be found in Proposition~21 in Section~8.4.3 of~\cite{KS}, and it is routine to translate the result into the setting of numeric~$q$.

\section[Representations of GDAHA via $R$-matrices]{Representations of GDAHA via $\boldsymbol{R}$-matrices}\label{explicit-section}

\subsection{The construction}\label{main-construction}

Again, we assume $q$ is a nonzero complex number which is not a root of unity, and all representations of $U_q(\mf{sl}_N)$ we consider will be type-1 and f\/inite-dimensional.

Let $n \geq 1$ be an integer, let $V_1, \dots, V_m$ be irreducible f\/inite-dimensional highest weight modules for $U_q(\mf{sl}_N)$, let $V_{m + 1} = V_{m + 2} = \dots = V_{m + n} = \mb{C}^N$ be copies of its vector representation, and let
\begin{gather*}V = V_1 \tensor \cdots \tensor V_m \tensor V_{m + 1} \tensor \cdots \tensor V_{m + n}.\end{gather*}
Let $E$ be the $0$-isotypic component of $V$. For $1 \leq i < m + n$ let $R_i$ denote $\operatorname{id}^{\otimes i - 1} \otimes R \otimes \operatorname{id}^{\otimes m + n - i - 1}$.

\begin{Theorem}[main theorem]\label{main-theorem}
	Let $\lambda_1, \dots, \lambda_m \in \mb{C}$ be any complex numbers and let $c = (n + \sum_k \lambda_k)/N$. Then for $t = q$, some specific values of $u_{kj}$, and some graph $D$, the following formulas for $1 \leq i < n$ and $1 \leq k \leq m$ define a representation of the GDAHA of rank $n$ attached to $D$ on $E$:
	\begin{itemize}\itemsep=0pt
	\item $T_i = q^{1/N} R_{m + i}$, and
	\item $U_k = q^{2[(N - c) / m + \lambda_k / N]} R_{m} R_{m - 1} \cdots R_{k + 1} R_{k}^2 R_{k + 1}^{-1} \cdots R_{m - 1}^{-1} R_m^{-1}$.
	\end{itemize}
\end{Theorem}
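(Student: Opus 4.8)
The plan is to verify each of the eight GDAHA relations directly for the proposed operators, leveraging the structural results from the Background section. The overall strategy is to recognize that two of the defining relations are essentially free, two more follow from standard braid combinatorics, and the genuine content lies in the conjugation structure of the $U_k$ operators.

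First I would dispose of the Hecke-type relations. Relation~\ref{GDAHA-rel8}, namely $T_i - T_i^{-1} = t - t^{-1}$ with $t = q$, is immediate from Proposition~\ref{R-form-Hecke-algebra}: the operators $q^{1/N} R_{m+i}$ act with eigenvalues $q$ and $-q^{-1}$, so they satisfy the quadratic Hecke relation $(T_i - q)(T_i + q^{-1}) = 0$, which rearranges to the desired form. Relations~\ref{GDAHA-rel2} and~\ref{GDAHA-rel3}, the braid and far-commutativity relations among the $T_i$, follow from Corollary~\ref{R-form-braid-group} since the $R_{m+i}$ for $1 \le i < n$ are simply the braid generators $R_i$ acting on the last $n$ vector-representation tensor factors, shifted by the index $m$. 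The scalar $q^{1/N}$ commutes with everything and is consistent across both sides of each relation, so it does not interfere.

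Next I would handle the commutation relations involving the $U_k$. The key observation is that each $U_k$ is a scalar multiple of a conjugated squared braiding $R_k^2$, moved past the vector-representation factors by the braid word $R_m \cdots R_{k+1}$; geometrically this is the operator that braids the $k$-th puncture-module past all the others and around a vector factor. For relation~\ref{GDAHA-rel4}, $[U_i, T_j] = 0$ when the indices are suitably separated, I would check that the braid word defining $U_i$ acts only on tensor positions disjoint from those on which $T_j = q^{1/N} R_{m+j}$ acts, so the two commute by the far-commutativity in Corollary~\ref{R-form-braid-group}. Relations~\ref{GDAHA-rel5} and~\ref{GDAHA-rel6}, the mixed commutators $[U_i, T_1 U_i T_1]$ and $[U_i, T_1^{-1} U_j T_1]$, are the heart of the matter: here I would use the QYBE together with the hexagon-type identities $\mf{R}_{12,3} = \mf{R}_{13}\mf{R}_{23}$ and $\mf{R}_{1,23} = \mf{R}_{13}\mf{R}_{12}$ from Proposition~\ref{r-matrix prop} to rewrite the conjugated products and exhibit the required commutation. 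The scalar prefactors cancel in each commutator, so only the braid-group identities are needed.

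The main obstacle, and the step requiring the most care, is relation~\ref{GDAHA-rel1}, the ``cyclic'' product $U_1 \cdots U_m T_1 \cdots T_{n-1} T_{n-1} \cdots T_1 = 1$, together with the eigenvalue relation~\ref{GDAHA-rel7} and the determination of the scalars $u_{kj}$. For relation~\ref{GDAHA-rel1} I would telescope the product $U_1 \cdots U_m$: writing each $U_k$ as a conjugate of $R_k^2$ and multiplying them in order should collapse, via repeated QYBE and the braid relations, into a single operator that braids the entire block of puncture-modules around the vector factors twice; combined with the full twist $T_1 \cdots T_{n-1}T_{n-1} \cdots T_1$ on the vector factors, this must reduce to a scalar on the $0$-isotypic component $E$. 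To pin that scalar to $1$ I would invoke the ribbon structure, specifically the compatibility~\eqref{eq:ribbon-equation} relating $R^2$ to the twist $\theta$; since $E$ is the $0$-isotypic (weight-zero, trivial-type) component, the eigenvalue $q^{(\lambda,\lambda+2\rho)}$ of $\theta$ evaluates to a computable power of $q$ determined by $c = (n + \sum_k \lambda_k)/N$, and the choice $t = q$ together with the stated prefactor $q^{2[(N-c)/m + \lambda_k/N]}$ in each $U_k$ is precisely what forces the total scalar to be $1$. Finally, relation~\ref{GDAHA-rel7} and the values $u_{kj}$ emerge from the spectral decomposition of $R_k^2$ acting on $V_k \otimes \mb{C}^N$: the eigenvalues of $R^2$ on $V(\lambda) \otimes \mb{C}^N$ are governed by~\eqref{eq:ribbon-equation} and the Clebsch--Gordan decomposition of this tensor product, which has only a bounded number $d_k$ of irreducible summands, so $U_k$ satisfies a polynomial equation of degree $d_k$ whose roots, rescaled by the prefactor, I would read off as the $u_{kj}$; these eigenvalue counts in turn dictate the leg lengths $d_k$ and hence the graph $D$.
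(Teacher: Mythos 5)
Your treatment of relations \eqref{GDAHA-rel2}--\eqref{GDAHA-rel6} and \eqref{GDAHA-rel8} matches the paper's argument, and your derivation of the eigenvalue relation \eqref{GDAHA-rel7} and of the parameters $u_{k,j}$ from the spectral decomposition of $R_k^2$ on $V_k \otimes \mb{C}^N$ via the ribbon identity \eqref{eq:ribbon-equation} is exactly the paper's use of Lemma~\ref{R2-action-on-two}. So the architecture is the same as the paper's up to relation \eqref{GDAHA-rel1}.

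The genuine gap is in your verification of relation \eqref{GDAHA-rel1}. After telescoping, the product $U_1\cdots U_m T_1\cdots T_{n-1}T_{n-1}\cdots T_1$ is, up to the accumulated scalar $q^{2N-2/N}$, the double braiding of the single vector factor $V_{m+1}$ against the tensor product of all remaining factors. The scalar by which this acts on $E$ is \emph{not} given by the ribbon twist $\theta$ evaluated on the $0$-isotypic component, as you propose: that twist is $q^{(0,\,0+2\rho)}=1$ and carries no information, and it is in any case not ``determined by $c$.'' What \eqref{eq:ribbon-equation} actually gives, via Lemma~\ref{R2-action-on-two}, is that on a trivial constituent of $\mb{C}^N\otimes W_i$ the double braiding acts by $q^{-(\epsilon_1,\epsilon_1+2\rho)-(\mu_i,\mu_i+2\rho)+0}$, where $\mu_i$ is the highest weight of the complementary irreducible $W_i$. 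The step you are missing is the Pieri-rule observation that the trivial representation occurs in $\mb{C}^N\otimes V(\mu_i)$ only when $\mu_i=(1,\dots,1,0)$, which is what pins the scalar to $q^{-2N+2/N}$ (the paper's Lemma~\ref{J1-acts-by-scalar}); without identifying $\mu_i$ the exponent is not computable. The constant $c$ enters only on the other side of the ledger: the product of the prefactors $q^{2[(N-c)/m+\lambda_k/N]}$ over $k$ and of the $2(n-1)$ factors of $q^{1/N}$ equals $q^{2N-2/N}$ precisely because $\sum_k\lambda_k=cN-n$, and this cancels the braiding scalar. As written, your mechanism would yield the scalar $1$ for the braiding part and the relation would fail to close.
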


Note that the $T_i$ are endomorphisms of $E$ because $V_{m + 1}, \dots, V_{m + n}$ are all copies of $\mb{C}^N$ and $R_{m + i}$ acts as a $U_q(\mf{sl}_N)$-module isomorphism. The $U_k$ act on $E$ for similar reasons. The dependence of the graph $D$ and the parameters $u_{kj}$ on the representations $V_1, \dots, V_m$ and the scalars $\lambda_1, \dots, \lambda_m \in \mb{C}$ is given later in equation~\eqref{eq:ukj-equation}.
\subsection{Validity of the def\/ining relations}

We need to verify that the relations \eqref{GDAHA-rel1}--\eqref{GDAHA-rel8} def\/ining GDAHA hold for the operators def\/ined in the theorem above. The relations~\eqref{GDAHA-rel2}, \eqref{GDAHA-rel3} and \eqref{GDAHA-rel8} among $T_i$ are the def\/ining relations of the type $A_{n - 1}$ Hecke algebra and hold by Proposition~\ref{R-form-Hecke-algebra}. Relation \eqref{GDAHA-rel4} holds trivially because the operators act on dif\/ferent tensor factors. Relations \eqref{GDAHA-rel5} and \eqref{GDAHA-rel6} follow from routine calculations using that the $R$-matrices $R_i$ satisfy the braid relations, and so we need only check relations~\eqref{GDAHA-rel1} and~\eqref{GDAHA-rel7}. We begin with relation~\eqref{GDAHA-rel7}, which restricts the eigenvalues of the operators $U_i$.

\subsubsection[Eigenvalues of $U_k$]{Eigenvalues of $\boldsymbol{U_k}$}\label{eigenvalue-subsubsection}

Recall from Section \ref{background-section} the functorial operator $\theta$ def\/ined on any $U_q(\mf{sl}_N)$-module and acting on any copy of the highest weight module $V(\lambda)$ by $q^{(\lambda, \lambda + 2 \rho)}$. From the compatibility between the~$R$ matrix and $\theta$ given in~\eqref{eq:ribbon-equation} that~$R^2$ acts as a scalar on each highest weight submodule of the tensor product of two highest weight modules. More precisely,

\begin{Lemma}	\label{R2-action-on-two}
Let $V(\mu)$ and $V(\mu')$ be highest weight representations of $U$ and let $V(\lambda) \subset V(\mu) \otimes V(\mu')$ be a copy of the simple highest weight module with highest weight~$\lambda$. Then~$R^2$ acts on~$V(\lambda)$ by the scalar $q^{- (\mu, \mu + 2 \rho) - (\mu', \mu' + 2 \rho) + (\lambda, \lambda + 2 \rho)}$.
\end{Lemma}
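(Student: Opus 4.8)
The plan is to read the scalar off directly from the ribbon compatibility~\eqref{eq:ribbon-equation}. Applying that identity with the two tensor factors taken to be $V(\mu)$ and $V(\mu')$ gives
\begin{gather*}
\theta_{V(\mu) \otimes V(\mu')} = R^2 \circ \big( \theta_{V(\mu)} \otimes \theta_{V(\mu')} \big)
\end{gather*}
as operators on $V(\mu) \otimes V(\mu')$, where $R^2 = R \circ R$ is the endomorphism obtained by applying the braiding $V(\mu)\otimes V(\mu') \to V(\mu')\otimes V(\mu)$ followed by $V(\mu')\otimes V(\mu) \to V(\mu)\otimes V(\mu')$. Since each $R$ is a $U$-module isomorphism, $R^2$ is a $U$-module endomorphism, so it is legitimate to evaluate both sides of the displayed identity on the submodule $V(\lambda)$.

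First I would compute the action of each side on $V(\lambda)$. The operator $\theta_{V(\mu)} \otimes \theta_{V(\mu')}$ is the scalar $q^{(\mu, \mu + 2\rho)} q^{(\mu', \mu' + 2\rho)}$ on the entire tensor product, hence in particular on $V(\lambda)$, because $\theta$ acts on each irreducible tensor factor by the indicated scalar. For the left-hand side the key point is that $\theta$ is an automorphism of the identity functor: naturality applied to the inclusion $\iota\colon V(\lambda) \hookrightarrow V(\mu) \otimes V(\mu')$ gives $\theta_{V(\mu) \otimes V(\mu')} \circ \iota = \iota \circ \theta_{V(\lambda)}$, and since $\theta_{V(\lambda)} = q^{(\lambda, \lambda + 2\rho)} \operatorname{id}$, the operator $\theta_{V(\mu) \otimes V(\mu')}$ restricts to the scalar $q^{(\lambda, \lambda + 2\rho)}$ on the image of $\iota$, that is, on $V(\lambda)$.

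Combining these two computations, the displayed identity restricted to $V(\lambda)$ reads $q^{(\lambda, \lambda + 2\rho)} = R^2 \cdot q^{(\mu, \mu + 2\rho) + (\mu', \mu' + 2\rho)}$, and solving for $R^2$ yields the claimed scalar $q^{-(\mu, \mu + 2\rho) - (\mu', \mu' + 2\rho) + (\lambda, \lambda + 2\rho)}$. There is no serious obstacle here; the only step requiring care is the justification that $\theta_{V(\mu) \otimes V(\mu')}$ restricts to the $\lambda$-scalar on the embedded copy of $V(\lambda)$, which is exactly where the naturality of the twist $\theta$ is used, rather than merely its value on abstract irreducibles.
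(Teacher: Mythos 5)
Your proposal is correct and follows exactly the paper's approach: the paper's proof is the one-line remark that the lemma is immediate from the ribbon compatibility $\theta_{V(\mu) \otimes V(\mu')} = R^2 \circ (\theta_{V(\mu)} \otimes \theta_{V(\mu')})$, and you have simply spelled out the details (naturality of $\theta$ applied to the inclusion of $V(\lambda)$, and the scalar action on each factor) that the paper leaves implicit.
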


\begin{proof}Immediate from $\theta_{V(\mu) \tensor V(\mu')} = R^2 (\theta_{V(\mu)} \tensor \theta_{V(\mu')})$.
\end{proof}

For $1 \leq k \leq m$, let $\tilde{U}_k := R_m \cdots R_{k + 1}R_k^2R_{k + 1}^{-1} \cdots R_m^{-1}.$ Relation \eqref{GDAHA-rel7} requires that the eigen\-va\-lues of $U_k$ counted with multiplicity are among $u_{k, 1}, \dots, u_{k, d_k}$, so it is equivalent to show that~$\tilde{U}_k$ has all its eigenvalues counted with multiplicity appearing in \begin{gather*}q^{-2[(N - c) / m + \lambda_k / N]} u_{k, 1}, \quad \dots, \quad q^{-2[(N - c) / m + \lambda_k / N]} u_{k, d_k}.\end{gather*}
As $\tilde{U_k}$ is conjugate to $R_k^2$ via $R_m\cdots R_{k + 1}$, we need only compute the eigenvalues of $R_k^2$ acting on $V_k \otimes \mb{C}^N$.

Suppose $V_k$ is of highest weight $\mu_k$. As $\mb{C}^N$ is of highest weight $\epsilon_1 := (1, 0, \dots, 0)$, it follows from Lemma \ref{R2-action-on-two} that the eigenvalues of $R_k^2$ on $V_k \tensor \mb{C}^N$ are $q^{-(\mu_k, \mu_k + 2 \rho) - (\epsilon_1, \epsilon_1 + 2 \rho) + (\eta, \eta + 2 \rho)}$, where~$\eta$ ranges over the highest weights of the irreducible constituents appearing in the direct sum decomposition of $V_k \tensor \mb{C}^N$. Let $d_k$ be the number of non-isomorphic irreducibles appearing in $V_k \tensor \mb{C}^N$, and suppose their distinct highest weights are $\eta_{k, 1}, \dots, \eta_{k, d_k}$. Then, setting
\begin{gather}\label{eq:ukj-equation}
u_{k, j} = q^{2[(N - c) / m + \lambda_k / N] -(\mu_k, \mu_k + 2\rho) + (\eta_{k, j}, \eta_{k, j} + 2 \rho) - N + 1/N}
\end{gather}
provides the choice of $d_1, \dots, d_m$ and $u_{kj}$ so that relation~\eqref{GDAHA-rel7} holds (note that $(\epsilon_1, \epsilon_1 + 2 \rho) = N - 1/N$). In particular, the graph $D$ is one with $m$ legs and leg lengths $d_1, \dots, d_m$.

\subsubsection{The f\/irst relation}

If we expand relation \eqref{GDAHA-rel1} of the GDAHA, i.e., $U_1\cdots U_m T_1 \cdots T_{n - 1}T_{n - 1} \cdots T_1 = 1$, in terms of the operators given in Theorem~\ref{main-theorem}, we see that it simplif\/ies to the following equality:
\begin{gather*}q^{2N - 2/N} R_m R_{m - 1} \cdots R_1 (R_1 R_2 \cdots R_m R_{m + 1} \cdots R_{m + n - 1}) R_{m + n - 1} R_{m + n - 2} \cdots R_{m + 1} = \operatorname{id}_E.\end{gather*}
Now, suppose we have the following identity:
\begin{gather*}R_1 R_2 \cdots R_{m + n - 1} R_{m + n - 1} R_{m + n - 2} \cdots R_2 R_1 = q^{-2N + 2/N} \operatorname{id}_{E'},\end{gather*}
where $E'$ is the 0-isotypic subspace of $V' := V_{m + 1} \tensor V_1 \tensor \cdots \tensor V_{m} \tensor V_{m + 2} \tensor \cdots \tensor V_{m + n}$, then one can write \begin{gather*}R_1 R_2 \cdots R_{m + n - 1} = q^{-2N + 2 / N} R_1^{-1} R_2^{-1} \cdots R_{m + n - 1}^{-1}\end{gather*} as maps from the 0-isotypic component of $V_{1} \tensor \cdots \tensor V_{m} \tensor V_{m + 2} \tensor \cdots \tensor V_{m + n} \tensor V_{m + 1}$ to the 0-isotypic component $E'$ of $V'$. Substituting this into the parenthesized part in the expression above, we see it immediately proves relation \eqref{GDAHA-rel1}. Thus we have reduced relation~\eqref{GDAHA-rel1} to the following lemma:

\begin{Lemma}\label{J1-acts-by-scalar}
The operator $R_1 R_2 \cdots R_{m + n - 1} R_{m + n - 1} R_{m + n - 2} \cdots R_2 R_1\colon V' \to V'$ acts on the zero isotypic component of $V'$ by $q^{-2N + 2/N}$.
\end{Lemma}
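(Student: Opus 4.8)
The plan is to recognize the operator as a double braiding and then evaluate it using the ribbon structure. Write $W := V_{m+1} = \mb{C}^N$ for the first tensor factor of $V'$ and $X := V_1 \tensor \cdots \tensor V_m \tensor V_{m+2} \tensor \cdots \tensor V_{m+n}$ for the remaining factors, so that $V' = W \tensor X$. First I would use the hexagon identities $\mf{R}_{1,23} = \mf{R}_{13}\mf{R}_{12}$ and $\mf{R}_{12,3} = \mf{R}_{13}\mf{R}_{23}$ of Proposition~\ref{r-matrix prop}, together with functoriality of the braiding, to show by induction on the number of factors that the right half $R_{m+n-1}\cdots R_2 R_1$ is the braiding $c_{W,X}\colon W \tensor X \to X \tensor W$ that carries $W$ to the last slot, while the left half $R_1 R_2 \cdots R_{m+n-1}$ is the braiding $c_{X,W}\colon X \tensor W \to W \tensor X$ that returns it to the front. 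Consequently the operator in the lemma is the double braiding (monodromy) $J = c_{X,W}\circ c_{W,X}$ of $W$ past $X$, an endomorphism of $V' = W \tensor X$.

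Next I would analyze $J$ on the trivial isotypic component $E'$ using the ribbon equation~\eqref{eq:ribbon-equation}. Decomposing $X = \bigoplus_\nu V(\nu)^{\oplus m_\nu}$ into irreducibles and using naturality of the braiding, $E'$ is contained in the sum of those summands $W \tensor V(\nu)$ for which $\operatorname{Hom}(W \tensor V(\nu), \mb{C}) \neq 0$, which by rigidity are exactly those with $V(\nu) \cong W^*$. On each such summand the double braiding $J$ restricts to the operator $R^2$ of~\eqref{eq:ribbon-equation} with $V(\lambda) = W$ and $V(\mu) = V(\nu) \cong W^*$, so that $J|_{W \tensor V(\nu)} = R^2 = \theta_{W \tensor V(\nu)} \circ \big(\theta_W \tensor \theta_{V(\nu)}\big)^{-1}$.

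Finally I would compute the three scalars. On the trivial component $\theta_{W \tensor V(\nu)}$ acts by $q^{(0,\, 0 + 2\rho)} = 1$. The vector representation $W$ has highest weight $\epsilon_1$, so $\theta_W = q^{(\epsilon_1,\, \epsilon_1 + 2\rho)} = q^{N - 1/N}$; and since $(\lambda, \lambda + 2\rho)$ is invariant under $\lambda \mapsto -w_0 \lambda$ (the highest weight of the dual), the twist takes the same scalar value on a simple module and its dual, whence $\theta_{V(\nu)} = \theta_{W^*} = q^{N - 1/N}$ as well. Therefore $J$ acts on $E'$ by $\big(\theta_W \theta_{W^*}\big)^{-1} = q^{-(N - 1/N)} q^{-(N - 1/N)} = q^{-2N + 2/N}$, as claimed.

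I expect the main obstacle to be the careful bookkeeping in the first step: verifying via the hexagon relations that the length-$2(m+n-1)$ product collapses to the single double braiding $c_{X,W}\circ c_{W,X}$, tracking which tensor factor occupies which slot at each stage. The remaining ingredients -- that the trivial summands of $W \tensor X$ are precisely the $W^*$-constituents of $X$, and that $\theta_{W^*} = \theta_W$ -- are standard facts about rigid ribbon categories and about the invariance of the form $(\lambda, \lambda + 2\rho)$, and should be routine.
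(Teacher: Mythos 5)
Your proposal is correct and follows essentially the same route as the paper: the hexagon identities of Proposition~\ref{r-matrix prop} collapse the product to the double braiding of $\mb{C}^N$ past the remaining factors, which then acts on each irreducible constituent by the ribbon-equation scalar of Lemma~\ref{R2-action-on-two}, and only the constituents dual to $\mb{C}^N$ meet the trivial isotypic component. The only cosmetic differences are that you invoke rigidity and the invariance of $(\lambda,\lambda+2\rho)$ under $\lambda \mapsto -w_0\lambda$ where the paper instead cites Pieri's rule and evaluates the Casimir value at the weight $(1,\dots,1,0)$ explicitly -- equivalent facts yielding the same scalar $q^{-2N+2/N}$.
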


\begin{proof} It follows from properties \eqref{GDAHA-rel2} and \eqref{GDAHA-rel3} of $R$-matrices in Proposition \ref{r-matrix prop} that we have
\begin{gather*}R_1 \cdots R_{m + n - 1} = R_{12\cdots(m + n - 1), m + n}\end{gather*}
and
\begin{gather*}R_{m + n - 1} \cdots R_1 = R_{1, 23\cdots (m + n)},\end{gather*}
where the notations $R_{12\cdots(m + n - 1), m + n}$ and $R_1 = R_{1, 23\cdots (m + n)}$ are as in Proposition~\ref{r-matrix prop}, i.e., these represent the $R$-matrices with f\/lip associated to the bracketings $(\bullet^{m + n - 1})\bullet$ and $\bullet(\bullet^{m + n - 1})$ respectively. Let
\begin{gather*}V_1 \otimes \cdots \otimes V_m \otimes V_{m + 2} \otimes \cdots \otimes V_{m + n} = \bigoplus_i W_i\end{gather*}
be a decomposition of $V_1 \otimes \cdots \otimes V_m \otimes V_{m + 2} \otimes \cdots \otimes V_{m + n}$ into irreducibles and similarly let
\begin{gather*}\mb{C}^N \otimes W_i = \bigoplus_{j} Z_{ij}\end{gather*}be a decomposition of $\mb{C}^N \otimes W_i$ into irreducibles. If $W_i$ is of highest weight $\mu_i$ and if $Z_{ij}$ is of \mbox{highest} weight $\nu_{ij}$, by Lemma~\ref{R2-action-on-two} and the f\/irst sentence of this proof, $R_1 {\cdots} R_{m + n - 1} R_{m + n - 1} {\cdots} R_1$ acts on $Z_{ij}$ by the scalar $q^{-(\epsilon_1, \epsilon_1 + 2\rho) - (\mu_i, \mu_i + 2\rho) + (\nu_{ij}, \nu_{ij} + 2\rho)}.$ However, we are only concerned with those $Z_{ij}$ with highest weight~0. By Pieri's rule, $V(0)$ can appear as a constituent of $\mb{C}^N \otimes V(\mu_i)$ only if $\mu_i$ is the weight $(1, \dots, 1, 0)$, i.e., when $V(\mu_i)$ is labeled by the Young diagram associated to the partition $1^{N - 1}$ \cite{fulton1991representation}. In this case the power of $q$ we just computed becomes $q^{-2N + 2/N}$, as needed. The lemma, and hence Theorem~\ref{main-theorem}, follow.\end{proof}

\section{Equivalence with the monodromy representation}\label{monodromy-section}

\subsection{The monodromy functor}

In \cite{EGO}, the authors introduced a certain connection of Knizhnik--Zamolodchikov type whose mo\-no\-dromy def\/ines a functor from the category of f\/inite-dimensional representations of a rational GDAHA (rGDAHA) $B_n$ to the category of f\/inite-dimensional representations of a corresponding non-degenerate GDAHA $H_n$ attached to the same diagram $D$ with m legs of lengths $d_1, \dots, d_m$. Recall that the rGDAHA $B_n$ is the algebra generated over $\mb{C}[\gamma_{1, 1}, \dots, \gamma_{m, d_m}, \nu]$ by elements $Y_{i, k}$ ($1 \le i \le n, \, 1 \le k \le m$) and the symmetric group $S_n$, such that the following hold for every $i, j, h \in [1, n]$ with $i \neq j$ and every $k, l \in [1, m]$:
\begin{enumerate}\itemsep=0pt
\item $s_{ij} Y_{i, k} = Y_{j, k} s_{ij}$;
\item $s_{ij} Y_{h, k} = Y_{h, k} s_{ij}$ if $h \neq i, j$;
\item $\prod\limits_{j = 1}^{d_k} (Y_{i, k} - \gamma_{k,j}) = 0$;
\item $\sum\limits_{j = 1}^{m} Y_{i, j} = \nu \sum\limits_{j \neq i} s_{ij}$;
\item $[Y_{i, k}, Y_{j, k}] = \nu (Y_{i, k} - Y_{j, k}) s_{ij}$;
\item $[Y_{i, k}, Y_{j, l}] = 0$ if $k \neq l$.
\end{enumerate}

Let us recall the monodromy functor from \cite{EGO}. Fix a f\/inite-dimensional $B_n$ module $M$. For convenience, let $\alpha_1, \dots, \alpha_m \in \mb{C}$ be distinct points def\/ined by $\alpha_i = -m - 1 + i$ and choose the basepoint $\textbf{z}_0 = (1, \dots, n)$ in the ordered conf\/iguration space $\tx{Conf}_n(\mb{C}\backslash\{\alpha_1, \dots, \alpha_m\})$ of $n$-points in the $m$-punctured plane. In \cite{EGO} the Knizhnik--Zamolodchikov-style connection
\begin{gather}\label{KZ-rGDAHA}
\nabla_{\rm EGO} := d - \sum_{i = 1}^n \left(\sum_{k = 1}^m \frac{Y_{i, k}}{z_i - \alpha_k} - \sum_{j \neq i} \frac{\nu s_{ij}}{z_i - z_j}\right) dz_i
\end{gather}
is introduced on the trivial vector bundle $E_M$ with f\/iber $M$ over $\tx{Conf}_n(\mb{C}\backslash\{\alpha_1, \dots, \alpha_m\})$. It follows readily from the def\/ining relations of $B_n$ and a calculation that the connection $\nabla$ is f\/lat and has trivial residue at $\infty$. This connection is visibly $S_n$-equivariant and so descends to a connection on the unordered conf\/iguration space $\tx{UConf}_n(\mb{C}\backslash\{\alpha_1, \dots, \alpha_m\})$. The residue condition implies that the monodromy of this connection at $\textbf{z}_0$ def\/ines a representation
\begin{gather*}\rho_M\colon \  \pi_1\big(\tx{UConf}_n\big(\mb{CP}^1\backslash\{\alpha_1, \dots, \alpha_m\}\big), \textbf{z}_0\big) \to \operatorname{Aut}(M).\end{gather*}

The algebra $H_n$ may be interpreted as the quotient of the group algebra\begin{gather*}\mb{C}\pi_1\big(\tx{UConf}_n\big(\mb{CP}^1\backslash\{\alpha_1, \dots, \alpha_m\}\big), \textbf{z}_0\big)\end{gather*}by the eigenvalue relations \eqref{GDAHA-rel7} and \eqref{GDAHA-rel8} of Def\/inition \ref{GDAHA-def}, where $U_i$ and $T_i$ are the generators represented by the following loops:
\begin{center}
	\includegraphics[scale=0.3]{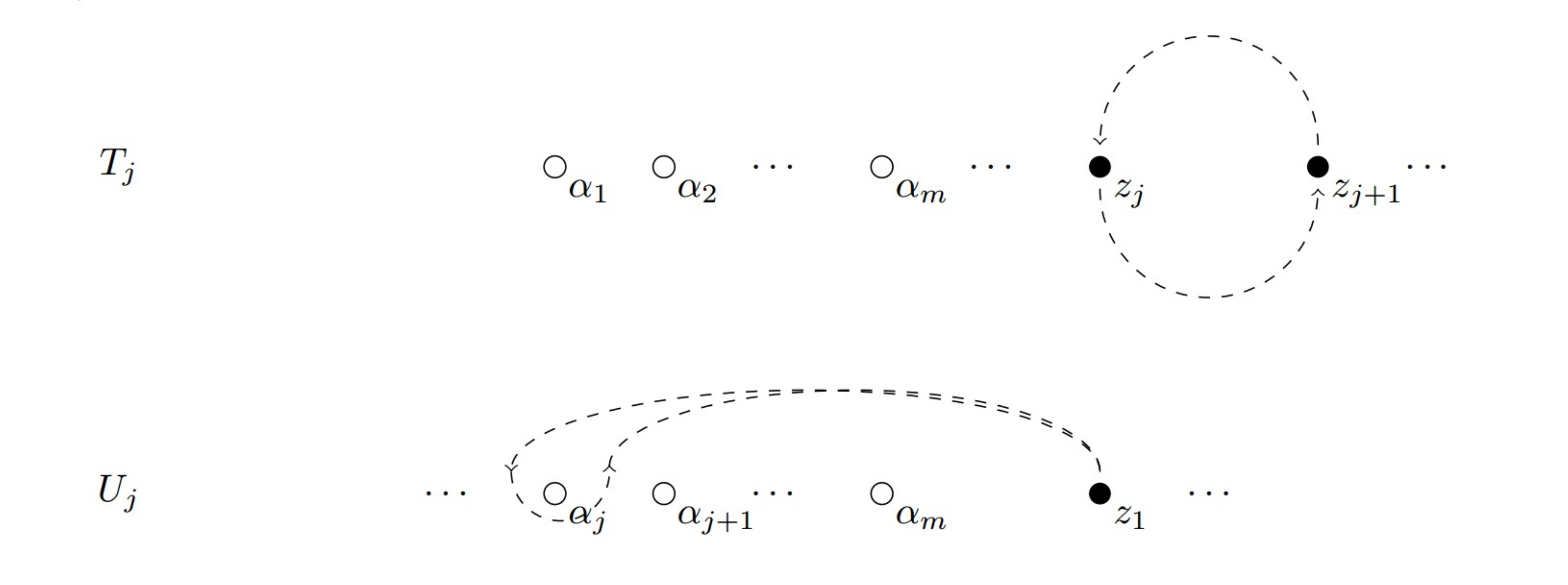}
\end{center}

In \cite[Section 4.2]{EGO} it is shown that the monodromy operators $\rho_M(T_i)$ and $\rho_M(U_k)$ satisfy the eigenvalue conditions \eqref{GDAHA-rel7} and \eqref{GDAHA-rel8} for $t = e^{- \pi i \nu}$, $u_{k,j} = e^{2 \pi i \gamma_{k, j}}$ and therefore def\/ine a f\/inite-dimensional representation of $H_n$ with these parameter values. This construction is clearly functorial in~$M$ and def\/ines a functor
\begin{gather*}
F \colon \ B_n\text{-}\mathrm{mod}_{\rm f.d.} \rightarrow H_n\text{-}\mathrm{mod}_{\rm f.d.},\end{gather*}
where as before $\text{-}\mathrm{mod}_{\rm f.d.}$ denotes the category of f\/inite-dimensional representations. Note that this functor is the identity at the level of vector spaces.

\subsection{Montarani's rGDAHA representation}
We now recall the construction of a family of f\/inite-dimensional representations of the rGDAHA $B_n$ given in \cite[Section~5]{Montarani10}. Let $V_1, \dots, V_m$ be irreducible f\/inite-dimensional representations of~$\mf{gl}_N$, and let $\lambda_1, \dots, \lambda_m \in \mb{C}$ be the scalars by which the identity matrix $I \in \mf{gl}_N$ acts, respectively. Let $\chi$ be a character of $\mf{gl}_N$, and denote the $\chi$-isotypic subspace of $V_1 \tensor \cdots \tensor V_m \tensor \big(\mb{C}^N\big)^{\tensor n}$ by
\begin{gather*}E_{n, \chi} := \big\{v \in V_1 \tensor \cdots \tensor V_m \tensor \big(\mb{C}^N\big)^{\tensor n} \,|\, xv = \chi'(x) v~\forall\, x \in \mf{gl}_N\big\}.\end{gather*}
Let $c \in \mb{C}$ be the scalar such that $\chi = c \operatorname{Tr}_{\mf{gl}_N}$. Note that if $E_{n, \chi} \neq 0$ then we have the relation $n + \sum\limits_{j = 1}^{m} \lambda_j = c N$ def\/ining $c$ in Theorem~\ref{main-theorem}.

Let $\Omega^{\mf{gl}_N} \in \mf{gl}_N \tensor \mf{gl}_N$ be the Casimir tensor of $\mf{gl}_N$, and let $\Omega^{\mf{gl}_N}_{ij}$ represent the tensor that acts as $\Omega^{\mf{gl}_N}$ on the $i$th and $j$th tensor factors and as identity on other factors.

\begin{Theorem}[\protect{\cite[Theorem 5.1]{Montarani10}}]\label{montarani-theorem}
	For any choice of $\nu \in \mb{C}$, the assignments $s_{ij} = \Omega^{\mf{gl}_N}_{m + i, m + j}$, and $Y_{i, k} = - \nu\big(\Omega^{\mf{gl}_N}_{k, m + i} + \frac{N - c}{m}\big)$ define a representation of the rGDAHA $B_n$ on $E_{n, \chi'}$ for an appropriate parameter value $\gamma$.
\end{Theorem}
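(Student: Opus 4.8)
The plan is to verify the six defining relations of $B_n$ one at a time, all operators being built from the Casimir tensor $\Omega = \Omega^{\mf{gl}_N} = \sum_{p,q} E_{pq} \tensor E_{qp}$. I would first record three structural facts that do all the work. (i) $\Omega$ is symmetric and $\mf{gl}_N$-invariant, so each $\Omega_{ab}$ is a $\mf{gl}_N$-module endomorphism of $V = V_1 \tensor \cdots \tensor V_m \tensor \big(\mb{C}^N\big)^{\tensor n}$ and hence preserves the isotypic space $E_{n, \chi'}$; invariance also yields the infinitesimal braid relations $[\Omega_{ab}, \Omega_{ac} + \Omega_{bc}] = 0$ for distinct $a,b,c$ and $[\Omega_{ab}, \Omega_{cd}] = 0$ for disjoint index pairs. (ii) On $\mb{C}^N \tensor \mb{C}^N$ the tensor $\Omega$ acts as the flip $P$, so each $s_{ij} = \Omega_{m+i, m+j}$ is the transposition operator $P_{m+i, m+j}$; in particular conjugation by $s_{ij}$ relabels tensor slots, e.g.\ $\Omega_{m+i, m+j} \Omega_{a, m+i} = \Omega_{a, m+j} \Omega_{m+i, m+j}$. (iii) From $\Omega = \frac{1}{2}(\Delta(C) - C \tensor 1 - 1 \tensor C)$, where the Casimir $C$ acts on $V(\lambda)$ by $(\lambda, \lambda + 2\rho)$, one reads off the eigenvalue of $\Omega$ on any highest-weight constituent of a tensor product.

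Relations (1), (2), (6) are then formal. Since $s_{ij} = P_{m+i, m+j}$, relation~(1) is the conjugation identity $P_{m+i, m+j} \Omega_{k, m+i} = \Omega_{k, m+j} P_{m+i, m+j}$ (the scalar shift $\frac{N-c}{m}$ commutes with everything); relation~(2) holds because for $h \neq i, j$ the operators $\Omega_{k, m+h}$ and $P_{m+i, m+j}$ act on disjoint tensor factors; and relation~(6), under the standing hypothesis $i \neq j$ together with $k \neq l$, is trivial because the four slots $k, m+i, l, m+j$ are pairwise distinct. For relation~(3) I would decompose $V_k \tensor \mb{C}^N = \bigoplus_j V(\eta_{k,j})$ into its $d_k$ irreducible constituents (their number governed by Pieri's rule), compute the eigenvalues of $\Omega_{k, m+i}$ on each via fact~(iii), and \emph{define} $\gamma_{k,j}$ to be the corresponding eigenvalue of $Y_{i,k} = -\nu\big(\Omega_{k,m+i} + \frac{N-c}{m}\big)$; this is the ``appropriate $\gamma$'' in the statement, and relation~(3) follows because $Y_{i,k}$ acts semisimply on $V_k \tensor \mb{C}^N$ with precisely these eigenvalues.

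The two substantive relations are (4) and (5). For relation~(4) I would expand $\sum_{k=1}^m Y_{i,k} = -\nu\big(\sum_{k=1}^m \Omega_{k, m+i} + (N-c)\big)$ and evaluate $\sum_{a \neq m+i} \Omega_{a, m+i}$ on $E_{n, \chi'}$: writing $\sum_{a \neq m+i} E_{pq}^{(a)} = \big(\sum_{a=1}^{m+n} E_{pq}^{(a)}\big) - E_{pq}^{(m+i)}$ and using that the total $\mf{gl}_N$-action $\sum_{a} E_{pq}^{(a)}$ acts by $\chi'(E_{pq}) = c\,\delta_{pq}$ on the isotypic space, this sum collapses to $c\operatorname{id} - \sum_{p,q} E_{pq}^{(m+i)} E_{qp}^{(m+i)} = (c - N)\operatorname{id}$, once one recognizes $\sum_{p,q} E_{pq}^{(m+i)} E_{qp}^{(m+i)}$ as the Casimir acting on the single factor $\mb{C}^N$ by $N$. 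Splitting $\sum_{a \neq m+i} \Omega_{a,m+i}$ into the block $\sum_{k=1}^m \Omega_{k,m+i}$ and, using symmetry of $\Omega$, the block $\sum_{j \neq i} \Omega_{m+j, m+i} = \sum_{j \neq i} s_{ij}$, and substituting back, yields exactly $\sum_k Y_{i,k} = \nu \sum_{j \neq i} s_{ij}$; the shift $\frac{N-c}{m}$ is precisely what makes the two $(N-c)$ terms cancel. For relation~(5) I would write $[Y_{i,k}, Y_{j,k}] = \nu^2 [\Omega_{k,m+i}, \Omega_{k,m+j}]$, use the infinitesimal braid relation to rewrite this commutator as $\Omega_{m+i,m+j}\Omega_{k,m+i} - \Omega_{k,m+i}\Omega_{m+i,m+j}$, and then apply the flip identity from fact~(ii), $\Omega_{m+i,m+j}\Omega_{k,m+i} = \Omega_{k,m+j}\Omega_{m+i,m+j}$, to obtain $-\nu^2(\Omega_{k,m+i} - \Omega_{k,m+j}) s_{ij}$, which is exactly $\nu(Y_{i,k} - Y_{j,k}) s_{ij}$.

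I expect relation~(4) to be the main obstacle, as it is the only place where the character condition $\chi' = c \operatorname{Tr}$ enters essentially and where one must carefully bookkeep the $E_{pq}^{(m+i)}$ terms subtracted from the global action; getting the constant $N - c$ and the normalization $\frac{N-c}{m}$ to balance is the delicate point. Relation~(5) is the second key step, being where the permutation-operator nature of $s_{ij}$ — hence the fact that the last $n$ tensor factors are all copies of $\mb{C}^N$ — is genuinely used.
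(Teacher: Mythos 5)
The paper states this theorem only as a citation to Montarani and supplies no proof of its own, so there is nothing internal to compare against; your verification is correct and follows the same direct route as Montarani's original argument, namely checking the six relations using $\mf{gl}_N$-invariance of the Casimir tensor, the fact that $\Omega^{\mf{gl}_N}$ acts as the flip on $\mb{C}^N \tensor \mb{C}^N$, the infinitesimal braid relations, and the evaluation of the total $\mf{gl}_N$-action by $\chi$ on the isotypic component. The one step worth tightening is in relation (4), where you replace $\sum_a E_{pq}^{(a)}$ by $c\,\delta_{pq}$ while it sits to the left of $E_{qp}^{(m+i)}$: these operators do not commute term by term, but the commutators $\big[E_{pq}^{(m+i)}, E_{qp}^{(m+i)}\big] = (E_{pp} - E_{qq})^{(m+i)}$ sum to zero over $p, q$, so the collapse to $(c - N)\operatorname{id}$ and hence the cancellation against the shift $\frac{N-c}{m}$ go through as you claim.
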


\subsection{Equivalence of the representations}

We may apply the monodromy functor of~\cite{EGO} to the representation of the rGDAHA~$B_n$ in the previous theorem to produce a representation $F(E_{n, \chi})$ of a corresponding GDAHA~$H_n$. On the other hand, given the representations $V_1, \dots, V_{m + n}$ of~$\mf{gl}_N$, we can extract the constants $\lambda_1, \dots, \lambda_n$ as above. When $\nu \notin \mb{Q}$, the associated parameter $t = e^{-\pi i \nu}$ is a nonzero complex number which is not a root of unity. Let $q = t$. If $V_i^q$ denotes the irreducible representation of~$U_q(\mf{sl}_N)$ with highest weight corresponding to the highest weight of~$V_i$, then from the $V_i^q$ and the $\lambda_i$ we may produce, using Theorem \ref{main-theorem}, a f\/inite-dimensional representation $E$ of an associated non-degenerate GDAHA $H_n'$.

\begin{Theorem}\label{comparison-theorem} For $\nu \notin \mb{Q}$, the parameters of the GDAHAs~$H_n$ and $H_n'$ agree, and the representations $F(E_{n, \chi})$ and $E$ are equivalent.\end{Theorem}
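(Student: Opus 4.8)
The plan is to reduce the theorem to a pointwise comparison of the generator actions: if, under a fixed identification of the underlying vector spaces, the monodromy operators $\rho_{E_{n,\chi}}(T_i)$ and $\rho_{E_{n,\chi}}(U_k)$ coincide with the operators $T_i = q^{1/N} R_{m+i}$ and $U_k$ prescribed in Theorem~\ref{main-theorem}, then both the equivalence $F(E_{n,\chi}) \cong E$ and the agreement of parameters follow at once, since the identity map on the common space will intertwine the two actions of $H_n = H_n'$. First I would set up the identification of spaces. Because the functor $F$ of \cite{EGO} is the identity on vector spaces, $F(E_{n,\chi})$ has underlying space $E_{n,\chi}$, the $\chi$-isotypic subspace of $V_1 \otimes \cdots \otimes V_m \otimes (\mb{C}^N)^{\otimes n}$ for $\mf{gl}_N$. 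Since $\nu \notin \mb{Q}$, the value $q = e^{-\pi i \nu} = t$ is not a root of unity, so the tensor-product decompositions of the type-$1$ $U_q(\mf{sl}_N)$-modules $V_i^q$ match those of the $\mf{sl}_N$-modules $V_i$ with identical highest weights and multiplicities; fixing the central character $\chi = c\operatorname{Tr}$ picks out the trivial $\mf{sl}_N$-isotypic component after restriction, which is precisely the $0$-isotypic space $E$ of Theorem~\ref{main-theorem}. I would fix this identification once and for all.

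The conceptual input is the Drinfeld--Kohno theorem: the monodromy of the KZ connection built from a semisimple Lie algebra and its Casimir agrees, as a braiding on the relevant tensor category, with the $R$-matrix braiding of the corresponding quantum group at $q = e^{-\pi i \nu}$. The EGO connection $\nabla_{\rm EGO}$ of \eqref{KZ-rGDAHA} has exactly this shape, except that it is built from the $\mf{gl}_N$ Casimir rather than the $\mf{sl}_N$ one. I would therefore split $\Omega^{\mf{gl}_N} = \Omega^{\mf{sl}_N} + \frac{1}{N}\, I \otimes I$. The $\mf{sl}_N$ part produces, via Drinfeld--Kohno, precisely the $U_q(\mf{sl}_N)$ $R$-matrices $R_{m+i}$ and the conjugated operators $R_m \cdots R_{k+1} R_k^2 R_{k+1}^{-1} \cdots R_m^{-1}$, while the central term $\frac{1}{N}\, I \otimes I$ acts by an explicit scalar on each tensor factor (namely $\lambda_k$ on $V_k$ and $1$ on each $\mb{C}^N$) and hence contributes only an abelian monodromy, i.e.\ a scalar. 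Tracking these scalars is what will produce the prefactors $q^{1/N}$ in $T_i$ and $q^{2[(N-c)/m + \lambda_k/N]}$ in $U_k$.

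Concretely, for the braid generators I would identify the loop defining $T_i$ with the half-twist exchanging the $(m+i)$-th and $(m+i+1)$-th points; Drinfeld--Kohno identifies its monodromy with the braiding $R_{m+i}$, and the central correction from $\frac{1}{N}\, I \otimes I$ supplies the scalar $q^{1/N}$, yielding $T_i = q^{1/N} R_{m+i}$. For $U_k$, I would express the loop in which a moving point encircles the puncture $\alpha_k$ as the standard braid word carrying that strand past $V_m, \dots, V_{k+1}$, looping fully around $V_k$, and returning; its monodromy is then the conjugated full twist $R_m \cdots R_{k+1} R_k^2 R_{k+1}^{-1} \cdots R_m^{-1}$, while the scalar part $-\nu(\frac{\lambda_k}{N} + \frac{N-c}{m})$ of the Montarani residue $Y_{i,k} = -\nu\big(\Omega^{\mf{gl}_N}_{k,m+i} + \frac{N-c}{m}\big)$ exponentiates under $e^{-2\pi i \nu(\cdot)}$ to exactly $q^{2[(N-c)/m + \lambda_k/N]}$, matching Theorem~\ref{main-theorem}. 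To see the parameters agree, I would compare eigenvalue data: the monodromy functor forces $u_{k,j} = e^{2\pi i \gamma_{k,j}}$, whereas the eigenvalues of $R_k^2$ on the constituents $V(\eta_{k,j})$ of $V_k \otimes \mb{C}^N$ are given by Lemma~\ref{R2-action-on-two}; I would check that Montarani's $\gamma_{k,j}$, read off from the eigenvalues of $Y_{i,k}$, exponentiates to the value in \eqref{eq:ukj-equation}, so that $H_n$ and $H_n'$ carry identical parameters.

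I expect the main obstacle to lie in this scalar bookkeeping rather than in the conceptual use of Drinfeld--Kohno. The delicate point is reconciling the $\mf{gl}_N$ normalization of $\nabla_{\rm EGO}$ with the $\mf{sl}_N$ $R$-matrix: one must verify that the central term $\frac{1}{N}\, I \otimes I$, together with the ribbon scalar $\theta = q^{(\lambda, \lambda + 2\rho)}$ governing the relation \eqref{eq:ribbon-equation}, combine to give exactly the fractional powers of $q$ (the $1/N$ and $\lambda_k/N$ contributions), with no residual discrepancy and with signs consistent with the convention $q = e^{-\pi i \nu}$. Once these scalars are pinned down, the identity map on the common underlying space intertwines the two actions of $H_n = H_n'$, which establishes the equivalence.
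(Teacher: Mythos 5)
Your proposal is correct and follows essentially the same route as the paper: decompose $\Omega^{\mf{gl}_N} = \Omega^{\mf{sl}_N} + \frac{1}{N} I \otimes I$ so that $\nabla_{\rm EGO}$ differs from the pulled-back $\mf{sl}_N$ KZ connection by a flat scalar-valued connection, apply the (numeric-$q$) Drinfeld--Kohno theorem to convert the KZ monodromy into $R$-matrices via the braid words $\sigma_{m+i}$ and $\sigma_m\cdots\sigma_k\sigma_k\sigma_{k+1}^{-1}\cdots\sigma_m^{-1}$, and track the abelian scalar contributions to recover the prefactors $q^{1/N}$ and $q^{2[(N-c)/m+\lambda_k/N]}$, with the parameter agreement checked separately through the Casimir discrepancy and Lemma~\ref{R2-action-on-two}. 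The scalar bookkeeping you flag as the delicate point is exactly what the paper isolates into the auxiliary connection $\nabla_{\rm diff}$ of equation~\eqref{diff-connection}.
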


\begin{proof} Let us f\/irst check that the eigenvalue parameters agree. The parameters $t$ for $H_n$ and~$H_n'$ agree and equal $e^{-\pi i \nu}$ by def\/inition. Let $\mu_k$ denote the highest weight of $V_k$, let $d_k$ be the number of non-isomorphic irreducible subrepresentations in $V_k \tensor \mb{C}^N = \bigoplus_{j} W_j$, and let their highest weights be $\eta_{k, 1}, \dots, \eta_{k, d_k}$. Then, by \cite[Lemma~5.2]{Montarani10}, the graph $D$ attached to the rGAHA~$B_n$ in Theorem~\ref{montarani-theorem} has leg lengths $d_1, \dots, d_m$ and parameter values $\gamma_{k, j} = -\nu(w_j + (N - c) / m)$ where $w_j$ is the eigenvalue of $\Omega^{\mf{gl}_N}$ on $W_j$. Therefore, with $q = e^{- i \pi \nu}$, the $u_{kj}$ parameter for the algebra $H_n$ is given by
\begin{gather*}u_{kj} := e^{2 \pi i \gamma_{k, j}} = q^{2(N - c) / m + (\eta_{k, j}, \eta_{k, j} + 2 \rho) - (\mu_k, \mu_k + 2 \rho) - (N - 1/N) + 2 \lambda_j / N},\end{gather*} where the term $2 \lambda_j / N$ comes from the discrepancy $\Omega^{\mf{gl}_N} = \Omega^{\mf{sl}_N} + \frac{1}{N} I \tensor I$ between the Casimir tensors for $\mf{gl}_N$ and $\mf{sl}_N$. This agrees with the parameter values for~$H_n'$ obtained in Section~\ref{eigenvalue-subsubsection}.

The strategy for proving the equivalence statement in Theorem~\ref{comparison-theorem} is to relate the connection $\nabla_{\rm EGO}$ to the classical KZ connection and to use the Drinfeld--Kohno theorem to relate the monodromy of the latter connection to $R$-matrices for $U_q(\mf{sl}_N)$. Let $V_1, \dots, V_{m + n}$ be f\/inite-dimensional irreducible $\mf{gl}_N$-representations as in Montarani's construction, with $V_{m + i}$ a copy of the vector representation $\mb{C}^N$ for $1 \leq i \leq n$. We have the associated KZ-connection
\begin{gather*}
\nabla_{\rm KZ} := d + \nu \sum_{1 \leq j \leq m + n,\ j \neq i} \frac{\Omega_{ij}^{\mf{sl}_N}}{z_i - z_j}dz_i
\end{gather*}
on the trivial vector bundle $E_{Y_{m + n}}$ with f\/iber $E_{n, \chi}$ over the unordered conf\/iguration space \begin{gather*}Y_{m + n} := \big\{(z_1, \dots, z_{m + n}) \in \mb{C}^{m + n} \colon z_i \neq z_j \text{ for all } i \neq j\big\}\end{gather*} of $m + n$ points in $\mb{C}$. Here $\Omega^{\mf{sl}_N}_{ij}$ denotes the Casimir tensor for $\mf{sl}_N$ acting on the~$i^{\rm th}$ and~$j^{\rm th}$ tensor factors and we view the $\mf{gl}_N$ representations as $\mf{sl}_N$ representations. It is well-known, and easy to check, that this connection is f\/lat. Observe that $\nabla_{\rm KZ}$ is $S_n$-equivariant, where $S_n$ acts on $Y_{m + n}$ by permuting the last $n$ coordinates and on the f\/iber by permuting the last $n$ tensor factors.

Let $Y := \tx{Conf}_n(\mb{C}\backslash\{\alpha_1, \dots, \alpha_m\})$ be the space on which $\nabla_{\rm EGO}$ is def\/ined. There is a natural $S_n$-equivariant map\begin{gather*}r \colon \  Y \rightarrow Y_{m + n}\end{gather*} given by $(z_1, \dots, z_n) \mapsto (\alpha_1, \dots, \alpha_m, z_1, \dots, z_n)$. Pulling back the connection $\nabla_{\rm KZ}$ along $r$ we obtain the $S_n$-equivariant f\/lat connection
\begin{gather*}
r^*\nabla_{\rm KZ} = d + \nu\sum_{i = 1}^{n} \left(\sum_{k = 1}^{m} \frac{\Omega_{k, m + i}^{\mf{sl}_N}}{z_i - \alpha_k} + \sum_{1 \le j \neq i \le n} \frac{\Omega^{\mf{sl}_N}_{m + i, m + j}}{z_i - z_j}\right) dz_i\end{gather*}
on the trivial vector bundle $E_Y$ with f\/iber $E_{n, \chi}$ over $Y$. On the other hand, inserting the operators $Y_{i, k}$ and $s_{i,j}$ on $E_{n, \chi}$ def\/ined in Theorem~\ref{montarani-theorem} into the connection $\nabla_{\rm EGO}$ def\/ined in equation \eqref{KZ-rGDAHA}, we obtain
\begin{gather*}
\nabla_{\rm EGO} = d + \nu\sum_{i = 1}^n\left(\sum_{k = 1}^{m} \frac{\Omega_{k, m + i}^{\mf{gl}_N} + \frac{N - c}{m}}{z_i - \alpha_k} + \sum_{1 \le j \neq i \le n} \frac{\Omega^{\mf{gl}_N}_{m + i, m + j}}{z_i - z_j}\right)dz_i.
\end{gather*}
Both connections above are f\/lat $S_n$-equivariant connections on the trivial vector bundle $E_Y$ over $Y$. To relate their monodromy, f\/irst recall that $\Omega^{\mf{sl}_N}$ and $\Omega^{\mf{gl}_N}$ are related by the equation $\Omega^{\mf{gl}_N} = \Omega^{\mf{sl}_N} + \frac{1}{N}I \otimes I$ where $I \in \mf{gl}_N$ is the identity matrix. As $I$ acts on $V_i$ by $\lambda_i$ for $1 \leq i \leq m$ and as $1$ for $m + 1 \leq i \leq m + n$, we have
\begin{gather}\label{connection-relationship}
\nabla_{\rm EGO} = r^*\nabla_{\rm KZ} + \nu \sum_{i = 1}^n\left(\sum_{k = 1}^{m} \frac{\lambda_k/N + (N - c)/m}{z_i - \alpha_k} + \sum_{1 \le j \neq i \le n} \frac{1/N}{z_i - z_j}\right)dz_i.
\end{gather}

Note that the connection
\begin{gather}\label{diff-connection}
\nabla_{\rm dif\/f} := d + \nu \sum_{i = 1}^n\left(\sum_{k = 1}^{m} \frac{\lambda_k/N + (N - c)/m}{z_i - \alpha_k} + \sum_{1 \le j \neq i \le n} \frac{1/N}{z_i - z_j}\right)dz_i
\end{gather}
on $E_Y$ is itself f\/lat and scalar-valued, so it follows from equation~\eqref{connection-relationship} that the parallel transport operator associated to a path $\gamma$ in $Y$ for the connection $(E_Y, \nabla_{\rm EGO})$ is obtained by multiplying the parallel transport operator associated to $\gamma$ for the connection $(E_Y, r^*\nabla_{\rm KZ})$ by the scalar-valued parallel transport operator associated to $\gamma$ for the connection $(E_Y, \nabla_{\rm dif\/f})$. By inspection of the residues in $\nabla_{\rm dif\/f}$ in equation~\eqref{diff-connection}, it follows that for the loops $U_k$ and $T_i$ about $\textbf{z}_0$ in~$Y/S_n$, the monodromies $\mu_{\nabla_{\rm EGO}}$ and $\mu_{\nabla_{r^*{\rm KZ}}}$ for the connections $\nabla_{\rm EGO}$ and $r^*\nabla_{\rm KZ}$ are related by
\begin{gather}\label{uk-scale}
\mu_{\nabla_{\rm EGO}}(U_k) = q^{2(\lambda_k/m + (N - c)/m)}\mu_{\nabla_{r^*{\rm KZ}}}(U_k)
\end{gather} and
\begin{gather}\label{ti-scale}
\mu_{\nabla_{\rm EGO}}(T_i) = q^{1/N}\mu_{\nabla_{r^*{\rm KZ}}}(T_i),
\end{gather}
where as before $q$ is def\/ined by $q = e^{-\pi i \nu}$.

All that remains is to relate the monodromy operators $\mu_{\rm KZ}(U_k)$ and $\mu_{\rm KZ}(T_i)$ to the $R$-matrix expressions appearing in Theorem~\ref{main-theorem} using the Drinfeld--Kohno theorem. The original formulation of the Drinfeld--Kohno theorem as stated in~\cite{drinfeld1989quasi} was for the $\hbar$-adic quantum group $U_\hbar(\mf{g})$, but here we need a version of this theorem for $U_q(\mf{sl}_N)$ for $q$ a nonzero complex number which is not a root of unity (Drinfeld's work in \cite{drinfeld1989quasi} was a generalization of previous results obtained by Kohno~\cite{kohno1987} for $\mf{sl}_N$). Such a~result was obtained in~\cite{kazhdan1991affine} and an exposition can be found, for example, in \cite[Theorem 8.6.4]{etingof1998lectures}. Similarly to \cite[Corollary 8.6.5]{etingof1998lectures}, this theorem immediately implies in particular that for $\nu \notin \mb{Q}$, the monodromy representation $\mu_{\nabla_{\rm KZ}}$ of $\pi_1(Y_{m + n}/S_n)$ on $E_{n, \chi}$ at $\textbf{z}_0$ is equivalent to the representation given by $R$-matrix expressions in which the class of a loop~$\gamma$ at~$\textbf{z}_0$ acts by the product of $R$-matrices $R_{i_1}\cdots R_{i_l}$ whenever $[\pi_*\gamma]$ factors as $\sigma_{i_1}\cdots \sigma_{i_l}$ in the braid group $B_{n + m} := \pi_1(Y_{m + n}/S_{m + n})$. Here $\sigma_i$ is the standard~$i^{\rm th}$ generator of $B_i$ (counterclockwise half-loop around the hyperplane $z_i = z_{i + 1}$), $R_i$ is the $R$-matrix for $U_q(\mf{sl}_N)$ with $q = e^{-\pi i \nu}$, and $\pi\colon Y_{m + n}/S_n \rightarrow Y_{m + n}/S_{m + n}$ is the natural projection. As $\mu_{\nabla_{r^*{\rm KZ}}}(\gamma) = \mu_{\nabla_{\rm KZ}}(r_*\gamma)$ for any loop~$\gamma$, Theorem~\ref{comparison-theorem} now follows from equations~\eqref{uk-scale} and~\eqref{ti-scale} and the observation that $[\pi_*r_*T_i] = \sigma_{m + i}$ and $[\pi_*r_*U_k] = \sigma_m \cdots \sigma_k \sigma_k \sigma_{k + 1}^{-1} \cdots \sigma_m^{-1}$.
\end{proof}

\begin{Remark} A similar approach may be used for some $\nu \in \mb{Q}$ using Part~IV of~\cite{kazhdan1991affine}, but the analogous statements more complicated due to the failure of $U_q(\mf{sl}_N)\textrm{-mod}_{f.d}$ to be semisimple in this case. If the representations $V_1, \dots, V_m$ are f\/ixed, then there is an analogous construction via $R$-matrices for $\nu \in \mb{Q}$ as long as the denominator of $\nu$ is suf\/f\/iciently large.
\end{Remark}

\subsection*{Acknowledgements}

A portion of this work was conducted at the 2015 MIT Summer Program in Undergraduate Research and the MIT Undergraduate Research Opportunities Program, for which the second author was a graduate student mentor. We thank Pavel Etingof for many useful discussions and for suggesting the direction of this work. We also thank the anonymous referees for their valuable remarks and suggestions.

\pdfbookmark[1]{References}{ref}
\LastPageEnding

\end{document}